\newcommand{\R}{\mathbb R}
\newcommand{\N}{\mathbb N}
\newcommand{\eps}{\varepsilon}
\newtheorem{teo}{Theorem}
\newtheorem{cor}{Corollary}[teo]
\newtheorem{lemma}[teo]{Lemma}
\newtheorem{prop}[teo]{Proposition}
\newtheorem{defin}{Definition}
\theoremstyle{definition}
\newtheorem{ex}{Example}
\theoremstyle{remark}
\newcommand{\h}{{\cal H}}
\DeclareMathOperator{\cat}{cat}
\begin{document}
\begin{centering}

{\Large\sc Geodesics in Conical Manifolds}

\vskip 0.5cm
\noindent{\sc Marco Ghimenti}\\
{\small Dipartimento di Matematica, Universit\`a di Pisa,\\ v. Buonarroti 2, 56100, Pisa, Italy}\\
{\small Dipartimento di Matematica Applicata, Universit\`a di Pisa, \\ via Bonanno 25b, 56100, Pisa, Italy}

\end{centering}

\begin{abstract}
The aim of this paper is to extend the definition of geodesics to
conical manifolds, defined as submanifolds of $\R^n$ with a
finite number of singularities. We look for an
approach suitable both for the local geodesic problem and for the
calculus of variation in the large. We give a definition
which links the local solutions of
the Cauchy problem (\ref{PbCau}) with variational
geodesics, i.e. critical points of the energy functional.
We prove a deformation lemma (Theorem \ref{maindeflemma}) which
leads us to extend the Lusternik-Schnirelmann theory to
conical manifolds, and to estimate the number of
geodesics (Theorem \ref{numpunticrit} and Corollary
\ref{catmaincor}). In section \ref{sezappl}, we provide some
applications in which conical manifolds arise naturally: in
particular, we focus on the brachistochrone problem for a
frictionless particle moving in $S^n$ or in $\R^n$ in the presence of
a potential $U(x)$ unbounded from below. We conclude with an
appendix in which the main results are presented in a
general framework.

\end{abstract}

\section{Introduction and basic definition}

The existence of geodesic is one of most studied
problems in the calculus of variation. In this paper we want to study
the presence of geodesics in a particular kind of manifolds,
called conical manifolds, that
appears in a natural way in some optimization problem (see section
\ref{sezbrac})

We define the following type of topological manifolds.
\begin{defin}
A conical manifold $M$ is a complete $n$-dimensional $C^0$ sub manifold
of $\R^m$ which is everywhere
smooth, except for a finite set of points $V$. A point in $V$ is
called vertex.
\end{defin}

Usually there are two ways to introduce geodesics in a smooth manifold:

\begin{description}
\item[Local (Cauchy problem):]{a geodesic is a solution of a
suitable Cauchy problem, i.e. given $p\in M$, $v\in T_pM$, we look
for a curve $\gamma:[0,\eps]\rightarrow M$ s.t.
\begin{equation}\label{PbCau}
\left\{
\begin{array}{l}
D_s\gamma'=0;\\
\gamma(0)=p;\\
\gamma'(0)=v.
\end{array}
\right.
\end{equation}}
\item[Global (Bolza problem):]{we consider the path space on $M$:
\begin{eqnarray*}
&&\Omega_{p,q}:=
\left\{
\gamma\in H^1([0,1],M)\;:\;\gamma(0)=p,\,\gamma(1)=q
\right\};\\
&&\Omega_{p}:=
\left\{
\gamma\in H^1([0,1],M)\;:\;\gamma(0)=\gamma(1)=p
\right\};
\end{eqnarray*}
a geodesic is a critical point of the energy functional defined
by\footnote{Hereafter we simply note $\Omega$
when we not need to specify the extremal points
of paths.}
\begin{eqnarray*}
&&E:\Omega\rightarrow\R\\
&&E(\gamma)=\int_0^1|\gamma'(s)|^2ds
\end{eqnarray*}
}
\end{description}

In conical manifolds the Cauchy problem (\ref{PbCau}) is not well posed,
and the solution is  neither unique, nor continuously dependent from
the initial data. The functional approach gives us an
easy result on minimal geodesics. However, this approach is not
completely useful: we can not  easily define a critical point
of energy different from minimum, because the energy is not a
$C^1$ functional.

Furthermore, the usual generalization of the derivative, the
weak slope, cannot be applied to our case, because it requires
some conditions on the manifolds $M$ which are not satisfied in
the case of conical manifolds. The weak slope was introduced by
Marco Degiovanni and Marco Marzocchi in \cite{DM94} (see also
\cite{De96,CD94,CDM93}). Moreover we refer to
\cite{DM99,MM02} for a weak slope approach to geodesic problem
and to \cite{Ghi04} for a detailed comparison with our approach.

We give the following definition of
geodesics, that appears to be the most suitable one for this kind
of problem.
\begin{defin}
\label{defgeod} A path $\gamma\in \Omega$ is a
geodesic iff
\begin{itemize}
\item{ the set $T=T_\gamma:=\{s\in(0,1):\gamma(s)\in V\}$ is a
closed set without internal part;} \item $D_s\gamma'=0\,\,\forall
s\in[0,1]\smallsetminus T$; \item $|\gamma'|^2$ is constant as a
function in $L^1$.
\end{itemize}
\end{defin}
We note that a geodesic may not be a local minimum for the length
functional, for example, we consider a Euclidean cone and a broken
geodesic passing through the vertex. However, this definition
allows us to prove the main theorem of this paper (see corollary
\ref{catmaincor})
\begin{teo}
Let $M$ be a conical manifold, $p\in M$. Then there are at least
$\cat\Omega$ geodesics.
\end{teo}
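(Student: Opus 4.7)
The plan is to mimic the classical Lusternik--Schnirelmann multiplicity argument, with the deformation lemma (Theorem \ref{maindeflemma}) playing the role normally filled by deformation along a pseudo-gradient flow. Set
\[
c_k \;=\; \inf\Bigl\{\,c\in\R \;:\; \cat_\Omega(E^c)\geq k\,\Bigr\}, \qquad k=1,\dots,\cat\Omega,
\]
where $E^c=\{\gamma\in\Omega : E(\gamma)\leq c\}$. These values satisfy $c_1\leq c_2\leq\dots\leq c_{\cat\Omega}$ and are finite: $E$ is bounded below by $0$, and for $c$ sufficiently large $E^c$ deformation retracts onto $\Omega$, hence has category equal to $\cat\Omega$.

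Next I would argue that each $c_k$ is a critical value in the sense of Definition \ref{defgeod}. Suppose not: then Theorem \ref{maindeflemma} should provide, for some small $\delta>0$, a continuous deformation $\eta:\Omega\to\Omega$ homotopic to the identity such that $\eta(E^{c_k+\delta})\subseteq E^{c_k-\delta}$. By the monotonicity and homotopy invariance of category one would obtain $\cat_\Omega(E^{c_k+\delta})\leq \cat_\Omega(E^{c_k-\delta})$, contradicting the infimum definition of $c_k$. The classical multiplicity trick then handles coincidences: if $c_k=c_{k+1}=\dots=c_{k+j-1}=:c$, then the critical set $K_c$ at that level must have $\cat_\Omega(K_c)\geq j$, hence contains at least $j$ distinct geodesics. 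Summing across all $k=1,\ldots,\cat\Omega$ yields at least $\cat\Omega$ geodesics in total.

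The technical crux I expect is the identification of the notion of critical point implicit in the deformation lemma with the geometric content of Definition \ref{defgeod}. Because $E$ is not $C^1$ on $\Omega$ and the weak-slope framework does not apply at vertices, the clause ``$c$ is regular'' must be read as ``no path of energy $c$ satisfies the three bullets of Definition \ref{defgeod}''. Proving that this failure is indeed the only obstruction to constructing the deformation requires a Palais--Smale-type compactness argument showing that an energy-decreasing sequence which cannot be further pushed down accumulates, modulo subsequence, at a path whose contact set $T$ with $V$ is closed and nowhere dense, whose covariant derivative vanishes off $T$, and whose speed is constant in $L^1$. Once this equivalence is secured, the Lusternik--Schnirelmann machinery proceeds in the standard fashion and the bound $\cat\Omega$ follows.
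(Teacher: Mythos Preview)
Your outline is the standard Lusternik--Schnirelmann min-max argument and is morally equivalent to the paper's route via Theorem~\ref{numpunticrit} and Corollary~\ref{catmaincor}; the paper simply phrases the count as $\cat\Omega^b-\cat\Omega^a$ rather than through the levels $c_k$, and then lets $a\to-\infty$, $b\to+\infty$.

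There is, however, a genuine gap in how you invoke the deformation lemma. Theorem~\ref{maindeflemma} (and likewise Theorem~\ref{deflemma2}, which is what you actually need for the multiplicity step) carries the standing hypothesis that some sublevel $\Omega^{c}$ contains only \emph{finitely many} geodesics. You apply the lemma at a putative regular value $c_k$ without ever securing that hypothesis. The paper handles this in one line: if the total number of geodesics is infinite the conclusion is trivial; otherwise every sublevel contains finitely many and the deformation lemmas are available everywhere. You should insert that dichotomy at the outset, before defining the $c_k$.

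Your final paragraph is redundant and slightly misdirected. The equivalence ``no path of energy in $[a,b]$ satisfies Definition~\ref{defgeod}'' $\Leftrightarrow$ ``the deformation exists'' is precisely the \emph{statement} of Theorem~\ref{maindeflemma}; it is not something you still have to establish at this point. The Palais--Smale-type compactness you anticipate is exactly Lemma~\ref{comp} (compactness of the reparametrization set $\Sigma_0$), and the construction of the energy-decreasing deformation off and near $\Sigma_0$ is Lemmas~\ref{retrsigma} and~\ref{retru}. Once you accept Theorems~\ref{maindeflemma} and~\ref{deflemma2} as given, your argument needs only the category calculus (subadditivity and deformation monotonicity), nothing more.
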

We are relating definition \ref{defgeod}, which is local, with
the topology of the path space, which is a tool of the calculus of
variation in the large; furthermore, this approach allows us to
find also non minimal geodesics.

Unfortunately, it's not easy to compute $\cat\Omega$ for conical
manifolds. Set
\begin{displaymath}
\Omega^\infty_{p,q}:=
\left\{
\gamma\in C^0([0,1],M)\;:\;\gamma(0)=p,\,\gamma(1)=q
\right\};
\end{displaymath}
we know that, for a smooth manifold, there is an homotopy
equivalence
\begin{equation}
\label{cap3homeqv}
\Omega^\infty_{p,q}\simeq\Omega_{p,q}
\end{equation}
(for a proof see, for example \cite[Th 1.2.10]{Kli78}).
In general this result is false for conical manifolds;
we show it by an example.
\begin{ex}
Let
\begin{displaymath}
M=\left\{ \left( x,x\sin \frac{1}{x}\right),\;x\in\R
\right\}\subset\R^2;
\end{displaymath}
this is an 1-dimensional conical manifold with vertex $O=(0,0)$.
Let $p,q\in M$ be two opposite points with respect to $O$: we have that, while
$\Omega^\infty_{p,q}$is connected, $\Omega_{p,q}$ is not,
so the usual homotopy
equivalence \ref{cap3homeqv} does not hold.
\end{ex}
Even if
an explicit calculation of $\cat\Omega$ in general is very difficult,
in section \ref{sezappl}, we will give a criterion
for which (\ref{cap3homeqv})
holds. Moreover
we show some applications in which conical manifolds
appears naturally.

\section{Deformation lemmas}
We want to prove that our definition of geodesic is compatible
with the energy functional, i.e. if there is no geodesic of energy
$c$, then there is no change of the topology of functional $E$ at
level $c$. To do that, we prove a deformation lemma
(Theorem \ref{maindeflemma}), that is the main result of this
section.

\begin{defin}
Given $p\in M$ we set
\begin{eqnarray*}
&&\Omega^b=\Omega_p^b:= \left\{ \gamma\in \Omega_p\;:\;E(\gamma)\leq b
\right\};\\
&&\Omega_a^b=\Omega_{a,p}^b:= \left\{ \gamma\in
\Omega_p\;:\;a\leq E(\gamma)\leq b \right\}.
\end{eqnarray*}
\end{defin}

\begin{teo}[Deformation lemma]\label{maindeflemma}
Let $M$ be
a conical manifold, $p\in M$. Suppose that there exists $c\in\R$ s.t.
$\Omega^c$ contains only a finite number of geodesics. Then if $a,b\in \R$,
and $a<b<c$ are s.t. the strip $[a,b]$ contains only regular values of $E$,
$\Omega^a$ is a deformation retract of $\Omega^b$.
\end{teo}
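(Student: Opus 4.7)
The plan is to mimic the classical Morse-theoretic deformation argument: construct a pseudo-gradient vector field $W$ on $\Omega^b \setminus K$, where $K$ denotes the finite set of geodesics in $\Omega^c$, such that $W$ strictly lowers $E$; then let the associated flow push $\Omega^b$ down into $\Omega^a$, stopping each orbit as soon as it hits level $a$. Since $[a,b]$ is assumed to contain no geodesic value, $K$ is disjoint from $\Omega_a^b$, and the flow is well defined on the whole strip.

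The construction of $W$ is local-to-global. Given any non-geodesic $\gamma_0 \in \Omega_a^b$, I split into cases according to how $\gamma_0$ fails Definition \ref{defgeod}. If $T_{\gamma_0}=\gamma_0^{-1}(V)$ has nonempty interior, $\gamma_0$ is constant on an open subinterval where it sits at a single vertex, and a reparametrisation of $[0,1]$ that shrinks this subinterval strictly decreases $E$. If $T_{\gamma_0}$ has empty interior but $D_s\gamma_0'\not\equiv 0$ on $[0,1]\setminus T_{\gamma_0}$, pick a connected component $(s_1,s_2)$ of this open set on which $\gamma_0$ is not a classical geodesic of the smooth manifold $M\setminus V$; restricting to variations compactly supported in $(s_1,s_2)$, on which $E$ is honestly $C^1$, the usual Palais negative gradient gives a descent direction that preserves $T_{\gamma_0}$. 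If instead $|\gamma_0'|^2$ is not essentially constant, a time-dependent reparametrisation of $[0,1]$ decreases $E$ by Cauchy--Schwarz. Each case produces an $H^1$-neighbourhood $U(\gamma_0)$ and a bounded Lipschitz vector field (or short one-parameter deformation) $W_{\gamma_0}$ satisfying $dE\cdot W_{\gamma_0} \le -\delta(\gamma_0) < 0$ on $U(\gamma_0)$.

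Since $\Omega$ is paracompact in its $H^1$ topology, a locally finite partition of unity subordinate to the cover $\{U(\gamma_0)\}$ of $\Omega^b\setminus N$, where $N$ is a small $H^1$-neighbourhood of $K$ chosen disjoint from $\Omega_a^b$, patches the $W_{\gamma_0}$ into a single globally defined locally Lipschitz vector field $W$, still satisfying a uniform negative bound $dE\cdot W \le -\delta_0 < 0$ on each compact subset of the strip. The ODE $\dot\eta=-W(\eta)$ then has orbits defined as long as $E(\eta)\in[a,b]$, and this uniform bound forces every orbit starting in $\Omega_a^b$ to reach the level $a$ in uniformly bounded time. Stopping the flow at level $a$ and extending by the identity on $\Omega^a$ yields the desired strong deformation retraction $\Omega^b\to\Omega^a$.

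The principal obstacle will be the construction near paths whose vertex preimage $T_\gamma$ is topologically complicated, for example Cantor-like, since $E$ is not $C^1$ there and $W_{\gamma_0}$ must be interpreted as a one-parameter family of deformations rather than a true tangent vector. What really requires care is the continuity of the assignment $\gamma\mapsto W_{\gamma_0}(\gamma)$ in the $H^1$ topology across nearby paths $\gamma$ whose vertex preimages $T_\gamma$ are close to but different from $T_{\gamma_0}$; making the local descent directions glue uniformly across this stratification of $\Omega$ by the combinatorial type of $T_\gamma$ is where the genuine technical weight of the theorem lies.
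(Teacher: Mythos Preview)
Your outline follows the paper's strategy in broad strokes---local descent directions patched into a global deformation---and your three cases correspond roughly to the dichotomy the paper exploits in Lemmas~\ref{retrsigma} and~\ref{retru}. But a structural piece is missing, and it is precisely the one you flag as ``where the genuine technical weight lies.''

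The crux is your Case~3 near what the paper calls $\Sigma_0$: the set of paths in $\Omega_a^b$ that are affine reparametrisations of an actual geodesic $\gamma_0$ lying \emph{below} the strip (at level $c_0<a$). Such a path is a smooth geodesic on each component of $[0,1]\smallsetminus T$, so your Case~2 yields nothing there; the only descent comes from your Case~3 reparametrisation. The paper's key observation (Lemma~\ref{comp}) is that $\Sigma_0$ is \emph{compact}, and this compactness is what makes Lemma~\ref{retrsigma} work: the reparametrisation deformations $\h$ do \emph{not} satisfy an estimate of the form $d(\h(\beta,t),\beta)\le t$ (see the appendix for why the standard weak-slope gluing fails), so one cannot simply patch them via a locally finite partition of unity. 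Instead the paper reduces to finitely many balls covering $\Sigma_0$ and performs an explicit finite iterated composition (Step~III of Lemma~\ref{retrsigma}), controlling the drift of each successive $\h_i$ by the estimate in Step~II.

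Your proposal never isolates this set, never proves its compactness, and consequently your partition-of-unity patching of ``one-parameter families of deformations rather than true tangent vectors'' is exactly the step that does not go through as written. Relatedly, the bound ``$dE\cdot W\le-\delta_0$ on each compact subset of the strip'' is insufficient: $\Omega_a^b$ is not compact, and you need the rate uniform on the whole strip to reach level $a$ in bounded time. The paper obtains this uniformity from the compactness of $\Sigma_0$ together with Palais--Smale away from $\Sigma$. Finally, note that the finiteness hypothesis on geodesics in $\Omega^c$---which you never invoke---is precisely what guarantees there are only finitely many compact sets $\Sigma_i$ to treat.
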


In order to prove this theorem, we must study the structure of
$\Omega^c$. For the moment, we consider a special case.

We suppose that $M$ has only a
vertex $v$, and we study the special closed geodesic
$\gamma_0$ for which there exists an unique $\sigma$
s.t. $\gamma_0(\sigma)=v$. We set $E(\gamma_0)=c_0$ and we
suppose that there exist $a,b\in\R$, $c_0<a<b$, s.t. $\Omega^b$
contains only the geodesics
$\gamma_0$ (so $\Omega_a^b$ contains no geodesics).

At last let us set

\begin{displaymath}
L_1=\int_0^\sigma|\gamma'_0|^2\;\;,\;\;
L_2=\int_\sigma^1|\gamma'_0|^2.
\end{displaymath}

We identify now two special subsets of
$\Omega_a^b$.
Let
\begin{equation}
\label{sigma}
\Sigma=\{\gamma\in \Omega_a^b,
\text{ s.t. } v\in \text{Im}\gamma\};
\end{equation}
for every $\gamma\in\Sigma$ it exists a set $T$ s.t. $\gamma(s)=v$
iff $s\in T$. Let

\begin{equation}
\label{sigma0}
\Sigma_0=\left\{
\begin{array}c
\gamma\in \Sigma,\text{ s.t. } D_s\gamma'(s)=0 \text{ and
}|\gamma'|^2
\text{ is constant} \\
\text{ on every connected component of }[0,1]\smallsetminus T
\end{array}
\right\}.
\end{equation}
Indeed, we will see in the proof of the next lemma that,
if $\gamma\in \Sigma_0$, then $\gamma([0,1])=\gamma_0([0,1])$, so
$\Sigma_0$ is the set of the piecewise geodesics that
are equivalent to $\gamma_0$
up to affine reparametrization.

\begin{lemma}\label{comp}
$\Sigma_0$ is compact.
\end{lemma}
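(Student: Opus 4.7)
My strategy is to identify $\Sigma_0$ with a compact two-parameter family of reparametrizations of $\gamma_0$, using the hypothesis that $\gamma_0$ is the unique geodesic below level $b$. For $\gamma \in \Sigma_0$, set $L=\int_0^1|\gamma'|\,ds$ and $\Phi(t)=L^{-1}\int_0^t|\gamma'(u)|\,du$, a continuous non-decreasing function $[0,1]\to[0,1]$, and define $\tilde\gamma(s)=\gamma(\Phi^{-1}(s))$ using the right-continuous generalized inverse. Since $\Phi$ is absolutely continuous with $\Phi'=0$ almost everywhere on $T_\gamma$, the set $\tilde T:=\Phi(T_\gamma)$ has Lebesgue measure zero; on its complement $|\tilde\gamma'|=L$, so $|\tilde\gamma'|^2\equiv L^2$ as an element of $L^1$. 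One also checks that $\tilde\gamma$ is continuous (every jump of $\Phi^{-1}$ happens across a plateau of $\Phi$, where $\gamma$ equals the constant $v$), that $\tilde T$ is closed with empty interior, and that $D_s\tilde\gamma'=0$ off $\tilde T$; hence $\tilde\gamma$ is a geodesic in the sense of Definition~\ref{defgeod}. Since $E(\tilde\gamma)=L^2\leq E(\gamma)\leq b$ by Cauchy--Schwarz, the standing hypothesis forces $\tilde\gamma=\gamma_0$. In particular $\Phi(T_\gamma)=\{\sigma\}$, and by monotonicity of $\Phi$ this forces $T_\gamma$ to be a single closed interval $[t_1,t_2]$ with $0<t_1\leq t_2<1$.

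On $[0,t_1]$ and $[t_2,1]$ the path $\gamma$ has constant speed, so $\Phi$ is affine there, and the identity $\gamma=\gamma_0\circ\Phi$ (which follows from $\tilde\gamma=\gamma_0$ on the complement of $T_\gamma$ and from $\gamma\equiv v\equiv\gamma_0(\sigma)$ on $T_\gamma$) shows that $\gamma|_{[0,t_1]}$ is an affine reparametrization of $\gamma_0|_{[0,\sigma]}$ and $\gamma|_{[t_2,1]}$ an affine reparametrization of $\gamma_0|_{[\sigma,1]}$. Thus $\gamma$ is completely determined by $(t_1,t_2)$, and a direct computation gives
\begin{equation*}
E(\gamma)=\frac{L_1\,\sigma}{t_1}+\frac{L_2\,(1-\sigma)}{1-t_2}.
\end{equation*}
Consequently $\gamma\leftrightarrow(t_1,t_2)$ is a homeomorphism between $\Sigma_0$ (with the $H^1$ topology) and
\begin{equation*}
K=\bigl\{(t_1,t_2)\in(0,1)^2 : t_1\leq t_2,\ a\leq E(t_1,t_2)\leq b\bigr\}.
\end{equation*}
Because $E$ is smooth on $(0,1)^2$ and blows up as $t_1\downarrow 0$ or $t_2\uparrow 1$, the set $K$ is closed and bounded in $\R^2$, hence compact by Heine--Borel, and compactness of $\Sigma_0$ follows.

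The main obstacle is the first step, specifically the case in which $T_\gamma$ is a nondegenerate interval: then $\Phi$ has a true plateau and $\Phi^{-1}$ is discontinuous, so one must verify carefully that $\tilde\gamma$ remains continuous and satisfies each clause of Definition~\ref{defgeod}. The crucial point is that $\Phi(T_\gamma)$ has Lebesgue measure zero even when $T_\gamma$ does not, because $\Phi'=0$ almost everywhere on $T_\gamma$; this is what allows $\tilde\gamma$ to qualify as a bona fide geodesic and thereby triggers the uniqueness hypothesis that pins it down to $\gamma_0$. Once that is done, Step 2 is an elementary finite-dimensional argument.
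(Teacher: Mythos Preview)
Your proof is correct and reaches the same conclusion as the paper, but the route is genuinely different. The paper argues sequential compactness directly: it first sketches (as you do more carefully) that every $\gamma\in\Sigma_0$ has $T_\gamma$ equal to a single point or a single interval, then takes a sequence $(\gamma_n)$, shows the parameters $\tau_n$ stay in a compact subinterval of $(0,1)$ via the energy bound, extracts a convergent subsequence of parameters, and finally verifies that the corresponding $\gamma_n$ converge strongly in $H^1$ by checking convergence of the norms together with weak convergence. Your argument instead builds a continuous bijection between $\Sigma_0$ and the explicit parameter set $K\subset\R^2$, and invokes Heine--Borel.

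What your approach buys: the arc-length reparametrization $\tilde\gamma$ gives a clean, uniform mechanism for proving that $T_\gamma$ must be connected (the paper only sketches this step with a one-line ``otherwise we obtain a second geodesic by reparametrization''), and once the bijection with $K$ is in hand, compactness is a one-line finite-dimensional fact with no weak/strong convergence bookkeeping. What the paper's approach buys: it avoids having to check that the parametrization map $K\to H^1$ is continuous, since it works with a concrete sequence and verifies $\|\gamma_n\|\to\|\bar\gamma\|$ directly. In your write-up that continuity is asserted (``is a homeomorphism'') but not proved; it is routine---for instance, once you know $K$ is compact and the map is a continuous bijection into a Hausdorff space, the inverse is automatically continuous---but you should at least note that the forward map $(t_1,t_2)\mapsto\gamma_{(t_1,t_2)}$ is continuous into $H^1$, since that is what actually delivers compactness of the image.
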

\begin{proof}
If $\gamma\in\Sigma_0$, only two situations occur: either $\exists
! \tau$ s.t. $\gamma(\tau)=v$, or $\exists [\tau_1,\tau_2]$ s.t.
$\gamma(t)=v$ iff $t\in[\tau_1,\tau_2]$. In fact, if it were two
isolated consecutive points $s_1,s_2\in T$ s.t $\gamma(s_i)=v$,
then,we can obtain by reparametrization a geodesic
$\gamma_1\neq\gamma_0$ in $\Omega^b$,
that contradicts our assumptions (this proves also that
$\gamma([0,1])=\gamma_0([0,1])$).

Now take $(\gamma_n)_n\subset\Sigma_0$. For simplicity we can
suppose that there exists a subsequence such that $\forall
n\exists\,!\tau_n$ for which $\gamma_n(\tau_n)=v$ (else,
definitely, $\exists\,[\tau_n^1,\tau_n^2]$ s.t. $\gamma_n(t)=v$
iff $s \in [\tau_n^1,\tau_n^2] $, but the proof follows in the
same way).

If we consider
$||\gamma||_{H^1}=E(\gamma)$, then we have
\begin{displaymath}
a\leq ||\gamma_n||\leq b,
\end{displaymath}
hence, up to subsequence, $\exists\, \bar\gamma$ s.t
$\gamma_n\rightarrow\bar\gamma$ in weak-$H^1$ norm ad uniformly.
Also, we know that $\forall n \;\exists\, \tau_n$ s.t.
$\gamma_n(\tau_n)=v$ and
\begin{equation}
\gamma_n=\left\{
\begin{array}{ll}
\gamma_0\left(\frac{\sigma}{\tau_n}s\right),&
s\in[0,\tau_n]\\
\gamma_0\left(\frac{1-\sigma}{1-\tau_n}s+
\frac{\sigma-\tau_n}{1-\tau_n}\right),&
s\in(\tau_n,1]
\end{array}
\right.
\end{equation}

It exists $0<p<1$ such that $p\leq\tau_n\leq1-p$, in fact
\begin{eqnarray*}
b&\geq&\int |\gamma'_n|^2= \int_0^{\tau_n}|\gamma'_n|^2+
\int_{\tau_n}^1|\gamma'_n|^2=\\
&=&\left[\frac{\sigma}{\tau_n}\right]^2
\int_0^{\tau_n}|\gamma'_0|^2\left(\frac{\sigma}{\tau_n}s\right)+\\
&&+\left[\frac{1-\sigma}{1-\tau_n}\right]^2
\int_{\tau_n}^1|\gamma'_0|^2\left(\frac{1-\sigma}{1-\tau_n}s+
\frac{\sigma-\tau_n}{1-\tau_n}\right)ds=\\
&=&\frac{\sigma}{\tau_n}\int_0^{\sigma}|\gamma'_0|^2(s')ds'+
\frac{1-\sigma}{1-\tau_n}\int_0^{\sigma}|\gamma'_0|^2(s')ds'=\\
&=&\frac{L_1^2}{\sigma\tau_n}+\frac{L_2^2}{(1-\sigma)(1-\tau_n)}\;,
\end{eqnarray*}
so
\begin{equation}
b\geq\frac{L_1^2}{\sigma\tau_n}
\;\Rightarrow\;\tau_n>\frac{L_1^2}{\sigma b}\,,
\end{equation}
and
\begin{equation}
b\geq\frac{L_2^2}{(1-\sigma)(1-\tau_n)}
\;\Rightarrow\;\tau_n<1-\frac{L_2^2}{b(1-\sigma)}\,.
\end{equation}
So a subsequence exists such that $\tau_n\rightarrow \tau$,
$p\leq\tau\leq1-p$. Obviously
\begin{eqnarray*}
\frac{\sigma}{\tau_n}s&\rightarrow&\frac{\sigma}{\tau}s\,,\\
\frac{1-\sigma}{1-\tau_n}s+\frac{\sigma-\tau_n}{1-\tau_n}
&\rightarrow&
\frac{1-\sigma}{1-\tau}s+\frac{\sigma-\tau}{1-\tau}\,.
\end{eqnarray*}
So for almost all $s$ we have
\begin{equation}\label{conv}
\gamma_n\rightarrow\tilde\gamma(s)=
\left\{
\begin{array}{ll}
\gamma_0\left(\frac{\sigma}{\tau}s\right),&s\in[0,\tau]\\
\gamma_0\left(\frac{1-\sigma}{1-\tau}s+\frac{\sigma-\tau}{1-\tau}\right),
&s\in(\tau,1]
\end{array}
\right.
\;.
\end{equation}
Both $\gamma_n$ and $\tilde\gamma$ are continuous, because $\gamma_0$ is
continuous, so the convergence in (\ref{conv}) is uniform; furthermore,
$\bar\gamma=\tilde\gamma$ for
the uniqueness of limit.

We have also that
\begin{equation}
||\gamma_n||=\frac{L_1^2}{\sigma\tau_n}+\frac{L_2^2}{(1-\sigma)(1-\tau_n)}
\rightarrow\frac{L_1^2}{\sigma\tau}+\frac{L_2^2}{(1-\sigma)(1-\tau)}=
||\bar\gamma||,
\end{equation}
so $\gamma_n\stackrel{H^1}{\rightarrow}\bar\gamma$ and
$a\leq||\bar\gamma||\leq b$, hence $\bar\gamma\in\Sigma_0$,
that concludes the proof.
\end{proof}

Now we shall prove two technical lemmas which are crucial for this
paper.

\begin{lemma}[existence of retraction in $\Sigma_0$]\label{retrsigma}
There exist $R\supset \Sigma_0$, $\nu,\bar t\in\R^+$ and
$$
\eta_R:R\times[0,\bar t\,]\rightarrow \Omega
$$
a continuous function s.t.

\begin{itemize}
\item $\eta_R(\beta,0)=\beta$,
\item $E(\eta_R(\beta,t))-E(\beta)<-\nu t$,
\end{itemize}
for all $t\in [0,\bar t\,]$, $\beta\in R$.
\end{lemma}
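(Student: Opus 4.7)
By the analysis in Lemma~\ref{comp}, each $\beta\in\Sigma_0$ is the affine reparametrization of $\gamma_0$ determined by its ``break time'' $\tau(\beta)\in(0,1)$ at which $\beta(\tau(\beta))=v$ (paths with a plateau $[\tau_1,\tau_2]$ on $v$ are handled in parallel by the obvious two-parameter analogue; I restrict to the single-break case). The same computation gives
\[
E(\beta)=f(\tau(\beta)),\qquad f(\tau):=\frac{\sigma L_1}{\tau}+\frac{(1-\sigma)L_2}{1-\tau};
\]
because $\gamma_0$ has constant speed one has $L_1/\sigma=L_2/(1-\sigma)$, so $f$ is strictly convex with unique global minimum $f(\sigma)=c_0$. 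The hypothesis $a\le E\le b$ on $\Sigma_0$ therefore confines $\tau(\beta)$ to a compact set $K=K_-\sqcup K_+\subset(0,\sigma-\delta]\cup[\sigma+\delta,1)$ and yields $|f'|\ge 3\nu$ on $K$ for some $\nu>0$.

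On $\Sigma_0$ the deformation is the piecewise-affine reparametrization pushing the break time towards $\sigma$: with $s(\tau):=\mathrm{sign}(\sigma-\tau)$, locally constant on $K$, let $\eta_R(\beta,t)$ be the reparametrization of $\beta$ with new break time $\tau(\beta)+t\,s(\tau(\beta))$. Its energy is $f(\tau(\beta)+t\,s(\tau(\beta)))$, which decreases at rate $|f'(\tau(\beta))|\ge 3\nu$ for small $t$; by compactness of $K$ a common $\bar t>0$ works and gives $E(\eta_R(\beta,t))-E(\beta)\le-2\nu t$ on $\Sigma_0$.

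The main step is extending this to an $H^1$-neighborhood $R$ of $\Sigma_0$ in $\Omega$. For $\beta\in R$ I would define $\tau(\beta)$ as the unique local minimizer of $t\mapsto|\beta(t)-v|^2$ lying near $\tau(\beta_0)$ for a nearby $\beta_0\in\Sigma_0$. This works because the minimum of $|\gamma_0(\cdot)-v|^2$ at $\sigma$ is strictly nondegenerate (V-shaped, since $|\gamma_0'(\sigma^\pm)|=\sqrt{L_1/\sigma}>0$), so the same is true of $|\beta_0(\cdot)-v|^2$ on $\Sigma_0$; the implicit function theorem then gives a single-valued continuous selection under small $H^1$-perturbation, and the positive separation of $K_-$ from $K_+$ prevents ambiguity between the two sides. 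Applying the same reparametrization prescription on $R$ yields
\[
E(\eta_R(\beta,t))=\tfrac{\tau(\beta)}{\tau(\beta)+ts}\,E_1(\beta)+\tfrac{1-\tau(\beta)}{1-\tau(\beta)-ts}\,E_2(\beta)
\]
with $s=s(\tau(\beta))$ and $E_i(\beta)$ the two piece-energies of $\beta$. Its $t$-derivative at $0$ is continuous in $\beta$ on $R$ and agrees with $f'(\tau(\beta))\,s(\tau(\beta))\le-3\nu$ on $\Sigma_0$, so after shrinking $R$ and $\bar t$ by continuity it remains $\le-2\nu$ throughout $R\times[0,\bar t\,]$, giving the desired bound $E(\eta_R(\beta,t))-E(\beta)\le-\nu t$.

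The expected obstacle is precisely this continuous single-valued selection of $\tau(\beta)$: it is what forces both the two-component structure $K_-\sqcup K_+$ and the nondegeneracy input, and it is the one place where the reduction to a small $H^1$-tube around the compact set $\Sigma_0$ is essential.
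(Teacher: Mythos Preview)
Your core mechanism---push the break time toward $\sigma$ by a piecewise-affine reparametrization---is exactly what the paper does on $\Sigma_0$. The divergence, and the gap, is in how you extend this to an $H^1$-neighborhood.

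The selection $\tau(\beta):=\arg\min_t|\beta(t)-v|^2$ cannot be justified by an implicit function theorem: a generic $\beta\in\Omega$ is only $H^1$, so $t\mapsto|\beta(t)-v|^2$ is merely continuous and its minimizer need not be unique. Concretely, one can build $\beta_n\to\beta_0$ in $H^1$ where each $\beta_n$ inserts a tiny extra loop through $v$ near $\tau(\beta_0)$; such a loop has length $O(1/n)$ traversed in time $O(1/n)$, hence costs $O(1/\sqrt n)$ in $H^1$, yet $|\beta_n(\cdot)-v|^2$ now has two zeros. The V-shape of $|\beta_0(\cdot)-v|$ confines minimizers to a small interval but does not single one out, so your ``nondegeneracy $+$ IFT'' step does not go through as written.

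The paper sidesteps the selection problem entirely. For each fixed $\gamma\in\Sigma_0$ it uses the break time $\tau$ of $\gamma$ (not of $\beta$) to build a reparametrization $\varphi_t$, and applies this \emph{same} $\varphi_t$ to every $\beta\in B(\gamma,d)$ via $\h(\beta,t)=\beta(\varphi_t(\cdot))$. Writing $\beta=\gamma+\delta$, the energy change splits into the already-controlled $\gamma$-term plus a quadratic term $A\le Kd^2t$ and a cross term $B\le K_1dt$ in $\delta$, with constants uniform in $\gamma$ because $\tau$ lies in a compact subinterval of $(0,1)$ (Lemma~\ref{comp}). The price is that different balls carry different $\varphi_t$'s, so a genuine gluing step is required: compactness of $\Sigma_0$ yields a finite cover $\{B(\gamma_i,d/4)\}$, and the local retractions $\h_i$ are composed iteratively through a partition of unity, controlling the drift of each iterate so that the composition stays well defined (Step~III, after \cite{CDM93}). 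Incidentally, your route is repairable---your energy identity holds for \emph{any} continuous $\tau(\beta)$ agreeing with the break time on $\Sigma_0$, so a Tietze extension (or a smoothed barycenter $\tau(\beta)=\int t\,\rho(|\beta(t)-v|)\,dt\big/\int\rho(|\beta(t)-v|)\,dt$) would close the gap and even bypass the finite-cover gluing---but not with the $\arg\min$/IFT justification you gave.
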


\begin{proof}We proceed by steps.

I) At first we want to prove that, for any $\gamma\in\Sigma_0$, there are
$\bar t,d,\nu\in\R^+$, and a local retraction
$$
\h:B(\gamma,d)\times[0,\bar t\,]\rightarrow \Omega
$$
such that

\begin{itemize}
\item $\h(\beta,0)=\beta$,
\item $E(\h(\beta,t))-E(\beta)<-\nu t$,

\end{itemize}
for all $t\in [0,\bar t\,]$, $\beta\in B(\gamma,d)$.
Furthermore, we will see that $d$ is independent from $\gamma$.

By hypothesis there exists an unique $\sigma\in[0,1]$ such that
$\gamma_0(\sigma)=~v$; furthermore, because $E(\gamma_0)=c_0$, we
know also that $|\gamma'_0|^2=c_0$ almost everywhere. Let
$\gamma\in\Sigma_0$, then Im$(\gamma)$=Im$(\gamma_0)$. In analogy
with Lemma \ref{comp} we suppose, without loss of generality, that
there exists an unique $\tau\in[0,1]$ s.t. $\gamma(\tau)=v$, and
both $\gamma'|_{(0,\tau)}$, $\gamma'|_{(\tau,1)}$ are constant,
although we cannot say if they are equals. We can choose a
suitable change of parameter $\varphi$ s.t.
\begin{equation}
\gamma(\varphi(s))=\gamma_0(s).
\end{equation}
By this way we can construct a flow for $\gamma$ as follows:
\begin{equation}
\h(\gamma,t)=\gamma(\varphi_t(s))=
\left\{
\begin{array}{ll}
\gamma\left(\frac{\tau}{a(t)}s\right)&s\in[0,a(t))\\
\gamma\left(\frac{\tau-1}{a(t)-1}s+\frac{a(t)-\tau}{a(t)-1}\right)
&s\in[a(t),1]
\end{array}
\right.
\end{equation}
where $a(t)=(1-t)\tau+t\sigma$.

Notice that
\begin{displaymath}
\gamma\big(\varphi_0(s)\big)=\gamma(s),\,
\gamma\big(\varphi_1(s)\big)=\gamma_0(s)
\end{displaymath}
and
\begin{displaymath}
\gamma(\tau)=v=\gamma\big(\varphi_1(\sigma)\big)=
\gamma\big(\varphi_t(a(t))\big).
\end{displaymath}

We recall that $l\left(\gamma_0|_{(0,\sigma)}\right)=L_1$,
$l\left(\gamma_0|_{(\sigma,1)}\right)=L_2$: obviously
\begin{displaymath}
\left[\frac{L_1}{\sigma}\right]^2=
\left[\frac{L_2}{1-\sigma}\right]^2=c_0;
\end{displaymath}
furthermore
\begin{displaymath}
\left[\frac{\partial}{\partial s}
\gamma(\varphi_t(s))\right]^2_{\Big|_{(0,a(t))}}=
\left[\frac{L_1}{a(t)}\right]^2,
\end{displaymath}
\begin{displaymath}
\left[\frac{\partial}{\partial s}
\gamma(\varphi_t(s))\right]^2_{\Big|_{(a(t),1)}}=
\left[\frac{L_2}{1-a(t)}\right]^2,
\end{displaymath}
then
\begin{eqnarray*}
E(\h(\gamma,t))&=&\int_0^{a(t)}\frac{L_1^2}{a(t)^2}ds+
\int_{a(t)}^1\frac{L_2^2}{(1-a(t))^2}ds=\\
&=&\frac{L_1^2}{\sigma^2}\frac{\sigma^2}{a(t)}+
\frac{L_2^2}{(1-\sigma)^2}\frac{(1-\sigma)^2}{1-a(t)}=
c_0\left(\frac{\sigma^2}{a(t)}+\frac{(1-\sigma)^2}{1-a(t)}
\right),
\end{eqnarray*}
so
\begin{equation}
\frac{\partial}{\partial t}E(\h(\gamma,t))=
c_0\left(\frac{(1-\sigma)^2}{(1-a(t))^2}-\frac{\sigma^2}{(a(t))^2}\right).
\end{equation}

It's easy to see that, either for $\sigma<\tau$ as for $\sigma>\tau$, we have
$\frac{\partial}{\partial t}E(\h(\gamma,t))<~0$, for all $t\in[0,1)$, as
expected. More over, because there is a $p>0$ s.t $p<\tau<1-p$ (as shown in
the previous lemma), we can find $\bar t, \nu$ s.t.
$\frac{\partial}{\partial t}E(\h(\gamma,t))<2\nu$
$\forall t\in [0,\bar t\,]$.

Now we want to extend $\h$ in a neighborhood of $\gamma$: it's
useful, for finding that, to work on the whole space
$H^1(I,\R^n)$. As above we consider $\gamma\in \Omega$.

Let $B_d=B^{H^1(I,\R^n)}(\gamma,d)\cap \Omega$.
For all $\beta\in B_d$ we can say
\begin{displaymath}
\beta=\gamma+(\beta-\gamma)=\gamma+\delta,\,\, ||\delta||\leq d
\end{displaymath}
We can extend $\h$ as follows:
\begin{equation}
\h(\beta,t)=\h(\gamma+\delta,t)=\gamma(\varphi_t(s))+\delta(\varphi_t(s))
\end{equation}
Obviously Im($\beta$)=Im($\h(\beta,t)$), so $\h(\beta,t)\in \Omega$.

We want to show that there exists a $d>0$ s.t.
\begin{equation}
E(\h(\beta,t))-E(\beta)<-\nu t\,\,\forall \beta\in B_d
\end{equation}

\begin{eqnarray*}
E(\h(\beta,t))-E(\beta)&=&\\
&=&\int|{\gamma(\varphi_t(s))'}+{\delta(\varphi_t(s))'}|^2-
\int|\gamma' (s)+\delta'(s)|^2=\\
&=&\int|{\gamma(\varphi_t(s))'}|^2-|\gamma' (s)|^2+
\int|{\delta(\varphi_t(s))'}|^2-|\delta'(s)|^2+\\
&&+\int<{\gamma(\varphi_t(s))'},{\delta(\varphi_t(s))'}>
-\int<\gamma' (s),\delta'(s)>.
\end{eqnarray*}
We have already shown that
\begin{equation}
\int|{\gamma(\varphi_t(s))}'|^2-|\gamma' (s)|^2<-2\nu t.
\end{equation}

Let
$$
A=\int_0^1|{\delta(\varphi_t(s))}'|^2ds-\int_0^1|\delta(s)'|^2ds
$$
and
$$
B=\int_0^1<{\gamma(\varphi_t(s))}',{\delta(\varphi_t(s))}'>ds
-\int_0^1<\gamma' (s),\delta'(s)>ds.
$$
The term A can be estimate as follows, remembering the definition
of $\varphi_t(s)$:

\begin{eqnarray*}
A&=&\\
&=&\int_0^{a(t)}\left|{\delta\left(\frac{\tau}{a(t)}s\right)}'\right|^2
+\int_{a(t)}^1\left|{\delta\left(\frac{\tau-1}{a(t)-1}s+
\frac{a(t)-\tau}{a(t)-1}\right)}'\right|^2-
\int_0^1|\delta'(s)|^2=\\
&=&\left[\frac{\tau}{a(t)}\right]^2
\int_0^{a(t)}\left|\delta'\left(\frac{\tau}{a(t)}s\right)\right|^2+\\
&&+\left[\frac{\tau-1}{a(t)-1}\right]^2
\int_{a(t)}^1\left|\delta'\left(\frac{\tau-1}{a(t)-1}s+
\frac{a(t)-\tau}{a(t)-1}\right)\right|^2-
\int_0^1|\delta'(s)|^2=\\
&=&\frac{\tau}{a(t)}\int_0^\tau|\delta'(s)|^2+
\frac{\tau-1}{a(t)-1}\int_\tau^1|\delta'(s)|^2-\int_0^1|\delta'(s)|^2=\\
&=&\frac{\tau-a(t)}{a(t)}\int_0^\tau|\delta'(s)|^2+
\frac{\tau-a(t)}{a(t)-1}\int_\tau^1|\delta'(s)|^2
\leq\\
&\leq&\max\left[\left|\frac{\tau-a(t)}{a(t)}\right|,
\left|\frac{\tau-a(t)}{a(t)-1}\right|\right]
\int_0^1|\delta'(s)|^2\leq\\
&\leq&|\tau-a(t)|
\max\left[\frac{1}{a(t)},\frac{1}{a(t)-1}\right]\int_0^1|\delta'(s)|^2\leq\\
&\leq&K||\delta||^2_{H^1}t\leq Kd^2\cdot t
\end{eqnarray*}
in fact $|\tau-a(t)|=|\tau-\sigma|t$. Furthermore, $K$ depends
only on $\gamma_0$, because $\exists p>0$ s.t.
$\tau\in[p,1-p]$ (as shown in Lemma \ref{comp}).

In the same way we can estimate $B$:

\begin{eqnarray*}
B&=&\\
&=&\frac{\tau-a(t)}{a(t)}\int_0^\tau<\gamma'(s),\delta'(s)>+
\frac{\tau-a(t)}{a(t)-1}\int_\tau^1<\gamma'(s),\delta'(s)>\leq\\
&\leq&\left|\frac{\tau-a(t)}{a(t)}\right|
\int_0^\tau\left|<\gamma'(s),\delta'(s)>\right|+
\left|\frac{\tau-a(t)}{a(t)-1}\right|
\int_\tau^1\left|<\gamma'(s),\delta'(s)>\right|\leq\\
&\leq&t|\tau-\sigma|\max\left[\frac{1}{a(t)},\frac{1}{1-a(t)}\right]
\int_0^1<\gamma',\delta'>\leq\\
&\leq&K_1d\cdot t,
\end{eqnarray*}
where, as above, $K_1$ is a constant depending only on $\gamma_0$.

Now, putting together all the pieces we have
\begin{equation}
E(\h(\beta,t))-E(\beta)\leq-2\nu t +Kd^2t+K_1dt<-\nu t
\end{equation}
if $d<\min\left(\frac{\nu}{K_1},\sqrt{\frac{\nu}{K}}\right)$.

II) We want to prove that, for all $\eps$ it exist a $0<\tilde t<\bar t$
s.t.
\begin{equation}
\h(B(\beta,d'),t)\subset B(\beta,(1+\eps)d')
\end{equation}
if $B(\beta,d')\subset B(\gamma,d)$, $t<\tilde t$. We start proving that,
for any $\beta,\beta_1\in B(\gamma,d)$,
\begin{eqnarray*}
||\h(\beta,t)-\h(\beta_1,t)||^2_{H^1} &\leq&
\left(\frac{\tau}{a(t)}\right)^2
\int_0^{a(t)}|\beta'-\beta'_1|^2\left(\frac{\tau}{a(t)}s\right)ds+\\
&&+\left(\frac{\tau-1}{a(t)-1}\right)^2
\int_{a(t)}^1|\beta'-\beta'_1|^2
\left(\frac{\tau-1}{a-1}s+\frac{a-\tau}{a-1}\right)ds=\\
&=&\left(\frac{\tau}{a(t)}\right)
\int_0^{\tau}|\beta'-\beta'_1|^2(r)dr+\\
&&+\left(\frac{\tau-1}{a(t)-1}\right)
\int_{\tau}^1|\beta'-\beta'_1|^2(r)dr\leq\\
&\leq&\max\left(\frac{\tau}{a(t)},\frac{\tau-1}{a(t)-1}\right)
\int_0^1|\beta'-\beta'_1|^2(r)dr\leq\\
&\leq&M^2(t)||\beta-\beta_1||^2_{H^1}\;\;\forall t.
\end{eqnarray*}
where $M(t)$ is a continuous function s.t. $M(0)=1$.

In particular $\forall\eps>0$ there
exists $\tilde t>0$ s.t. for $t\leq\tilde t$
\begin{equation}
d(\h(\beta,t),\h(\beta_1,t))<
\left(1+\frac{\eps}{2}\right)||\beta-\beta_1||_{H^1}.
\end{equation}
So, if $\beta_1\in B(\beta,d')$, for all $\eps>0$ a $\tilde t$ exists s.t.
\begin{eqnarray*}
d(\h(\beta_1,t),\beta)&\leq&\\
&\leq&d(\h(\beta_1,t),\h(\beta,t))+d(\h(\beta,t),\beta)\leq\\
&\leq&\left(1+\frac{\eps}{2}\right)d'+\frac{\eps}{2}\,d'\\
&\leq&(1+\eps)d'\;\;\;\;\forall\,0\leq t\leq\tilde t,
\end{eqnarray*}
because $\h(\beta,t)$ is continuous in $t$. Notice that $\tilde t$ is
independent from $\beta_1$, so we have that, chosen $\beta$ and $d'$ s.t.
$B(\beta,d')\subset B(\gamma,d)$, then
for every $\eps>0$ there exists $\tilde t>0$ such that
\begin{equation}\label{dprimo}
\h(B(\beta,d'),t)\subset B(\beta,(1+\eps)d') \;\;\forall 0\leq t\leq\tilde t
\end{equation}

III) We have now to compound all these retraction. We follow an idea shown
by Corvellec, Degiovanni and Marzocchi in \cite[theorem 2.8]{CDM93}, and
we combine it with the compactness of $\Sigma_0$.

Take $d$ as in the first step. Then $\bigcup_\gamma B(\gamma,d/4)$ covers
$\Sigma_0$. By compactness we can choose
$$
\gamma_1,\cdots,\gamma_N\text{ s.t. }
\bigcup_{i=1}^N B\left(\gamma_i,\frac{d}{4}\right)\supset\Sigma_0.
$$
Set $B(\gamma_i,d/4)=B_i$, $R=\bigcup_i\overline{B}_i$ and
$\nu=\min_i\nu_{\gamma_i}$ and $\h_i=\h_{\gamma_i}$. Let
$$
\vartheta_i:H^1(I,M)\rightarrow [0,1]
$$
a partition of unity referred to $B_i$.

We want to define a sequence of continuous maps
\begin{displaymath}
\eta_h:R\times[0,\tilde t_h ]\rightarrow \Omega,
\end{displaymath}
for $h=1,\cdots,N$, defined as follows:
\begin{equation}
\eta_1(\beta,t)=\left\{
\begin{array}{ll}
\h_1(\beta,\vartheta_1t),& \beta\in\overline{B}_1\\
\beta, &\text{outside;}
\end{array}
\right.
\end{equation}
\begin{equation}
\eta_h(\beta,t)=\left\{
\begin{array}{ll}
\h_h(\eta_{h-1}(\beta,t),\vartheta_ht),& \beta\in\overline{B}_h\\
\eta_{h-1}(\beta,t),&\text{outside.}
\end{array}
\right.
\end{equation}
We want that, for all $h$,
\begin{enumerate}
\item $\eta_h(\beta,0)=\beta$;
\item $E(\eta_h(\beta,0))-E(\beta)\leq-\nu t \sum\limits_{i=1}^h\vartheta_i$;
\item $\forall i,\forall \eps\, \exists \tilde t_h$ s.t.
$\eta_{h-1}(\overline{B}_i,t)\subset B\big(\gamma_i,(1+\eps)^{h-1}d/4\big)$
if $0\leq t\leq \tilde t_h$.
\end{enumerate}

The proof of the first two condition is obvious. The last
condition, that assures the good definition of $\eta_h$, will be
proved by induction on $h$.

a) Case $h=1$:

If $B_i=B_1$ then $\eta_1(\beta,t)=\h_1(\beta,\vartheta t)$. Hence
there exists $\tilde t$ s.t., if $0\leq t\leq \tilde t$
\begin{equation}
d(\gamma_1,\eta_1(\beta,t))=d(\gamma_1,\h_1(\beta,\vartheta_1 t))<
(1+\eps)\frac{d}{4},
\end{equation}
in fact we know that there exists $\tilde t$ s.t.
\begin{equation}
d(\gamma_1,\h_1(\beta,t))<(1+\eps)\frac{d}{4},
\end{equation}
for all $0\leq t\leq \tilde t$, and $\vartheta_1\leq 1$, so
$\vartheta_1 t\leq t\leq \tilde t$.

If $B_1\cap B_i=\emptyset$, then
\begin{equation}
\eta_1(\overline{B}_i,t)=\overline{B}_i\;\;\forall t,
\end{equation}
so the proof is obvious.

Finally, if  $B_1\cap B_i\neq\emptyset$, we know that
\begin{displaymath}
\overline{B}\left(\gamma_i,\frac{d}{4}\right)\subset B(\gamma_1,d),
\end{displaymath}
so we can say that
$$
\eta_1(\overline{B}_i,t)=\h_1(\overline{B}_i,\vartheta_1t),
$$
hence we can repeat the above deduction. Taking the minimum of $\tilde t$
so found (they are a finite number) we can conclude.

b) Inductive step.

Let $\eta_{h-1}$ be s.t., given $\eps>0$, for all $i$ there
exists a $\tilde t_{h-1}$ for which
\begin{equation}
\eta_{h-1}(\overline{B}_i,t)\subset
B\left(\gamma_i,(1+\eps)^{h-1}\frac{d}{4}
\right)\;\;\forall 0\leq t\leq \tilde t_h.
\end{equation}
At first notice that we can choose $\eps$ s.t.
$$
\eta_{h-1}(\overline{B}_h,t)\subset B(\gamma_n,d),
$$
so $\eta_h$ is well defined.

Either if $B_i=B_h$ or if $B_h\cap B_i=\emptyset$ the proof is obvious.

Let $B_h\cap B_i\neq\emptyset$; if
$\beta\in B_i\smallsetminus \overline{B}_h$ then
$\eta_{h}(\beta,t)=\eta_{h-1}(\beta,t)$, so
\begin{equation}
d(\gamma_h,\eta_h(\beta,t))<(1+\eps)^{h-1}\frac{d}{4}<
(1+\eps)^{h}\frac{d}{4}.
\end{equation}
Otherwise, by inductive step
$$
\eta_{h-1}(\overline{B}_h,t)\subset
B\left(\gamma_h,(1+\eps)^{h-1}\frac{d}{4}
\right),
$$
and, by (\ref{dprimo}) we have that there exists a $\tilde t$ s.t
\begin{equation}
\h_h(\eta_{h-1}(\beta,t))\subset
B\left(\gamma_h,(1+\eps)^{h}\frac{\delta}{4}\right)
\end{equation}
if $\beta\in \overline{B}_h\cap B_i$ and $0\leq t\leq \tilde t$,
so the proof follows immediately. Because we have $N$ iterations,
we choose $\bar\eps$ s.t. $(1+\bar\eps)^N<2$, and we define
\begin{equation}
\bar t=\min_h\{t_{\bar \eps,h}\text{ previously found }\}.
\end{equation}
By compactness $\bar t\,>0$. Set
\begin{equation}
\eta_R=\eta_N,
\end{equation}
so we find a continuous map
\begin{equation}
\eta_R:R\times[0,\bar t\,]\rightarrow \Omega
\end{equation}
such that
\begin{itemize}
\item $\eta_R(\beta,0)=\beta$,
\item{$E(\eta_t(\beta,t))-E(\beta)\leq -\nu t
\sum\limits_{i=1}^N\vartheta_i=-\nu t$},
\end{itemize}
for every $\beta\in R$, $0\leq t\leq \bar t$.
\end{proof}

\begin{lemma}\label{retru}
For any $U\supset\Sigma_0$ there exist $\bar t,\nu\in\R^+$ and a
continuous functional
\begin{displaymath}
\eta_U:
\Omega_a^b\smallsetminus U\times [0,\bar t\,]
\rightarrow \Omega_a^b
\end{displaymath}
such that
\begin{itemize}
\item $\eta_U(\cdot,0)=\text{Id}$,
\item $E(\eta_U(\beta,t))-E(\beta)\leq-\nu t$,
\end{itemize}
for all $t\in[0,\bar t\,]$, for all $\beta \in \Omega_a^b\smallsetminus U$
\end{lemma}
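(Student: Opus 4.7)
The strategy is to imitate the three-step scheme of Lemma \ref{retrsigma}: at each $\beta \in \Omega_a^b \setminus U$ build a local flow on a small $H^1$-ball giving energy decrease $\leq -\nu_\beta t$, secure uniform constants $(d,\nu,\bar t\,)$ over $\Omega_a^b \setminus U$, and paste the local flows by the Corvellec--Degiovanni--Marzocchi partition-of-unity induction of step III of Lemma \ref{retrsigma}. The local construction splits according to whether $\beta$ meets the vertex.

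If $\beta$ avoids $v$, then $\beta([0,1])$ has positive distance from $v$ in $\R^m$; on a small $H^1$-ball around $\beta$ all paths still avoid $v$ and lie in the smooth part of $M$, so $E$ is $C^2$ there and, by our standing assumption, $\beta$ is not a critical point. A standard pseudo-gradient gives a continuous local flow with the required descent. If instead $\beta \in \Sigma \setminus \Sigma_0$ with vertex set $T_\beta$, then on some connected component $I$ of $[0,1] \setminus T_\beta$ either $D_s \beta' \not\equiv 0$ or $|\beta'|^2$ is not constant. In the first subcase a pseudo-gradient variation compactly supported in $I$ (hence fixing the endpoint values equal to $v$) decreases $E$; in the second an affine reparametrization redistributing length between adjacent intervals decreases $E$ by the very same computation giving $\partial_t E(\h(\gamma,t)) < 0$ in step I of Lemma \ref{retrsigma}. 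In both subcases the local descent extends to a whole $H^1$-ball of $\beta$ via the additive trick $\gamma + \delta \mapsto \gamma(\varphi_t(s)) + \delta(\varphi_t(s))$ already used there.

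The main obstacle is uniformity of $(d,\nu,\bar t\,)$ on $\Omega_a^b \setminus U$. I would prove it by contradiction: pick $\beta_n \in \Omega_a^b \setminus U$ with available descent rates tending to $0$; since $a \leq E(\beta_n) \leq b$, a subsequence converges weakly in $H^1$ and uniformly to some $\bar\beta \in \Omega_a^b$, by the same extraction carried out in Lemma \ref{comp}. The vanishing descent rate forces $D_s \bar\beta' = 0$ off the vertex set and $|\bar\beta'|^2$ to be piecewise constant, i.e.\ $\bar\beta \in \Sigma_0 \subset U$. Since $U$ is open and, by the energy-convergence computation in Lemma \ref{comp}, $\beta_n \to \bar\beta$ strongly in $H^1$, one has $\beta_n \in U$ eventually, contradicting $\beta_n \notin U$.

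Granted uniform $(d,\nu,\bar t\,)$, I cover $\Omega_a^b \setminus U$ by balls $B(\beta_i, d/4)$, extract a locally finite subcover (again by the compactness argument above, restricting to a relatively compact piece and, if necessary, enlarging $U$ by a small buffer to trim the boundary), take a subordinate partition of unity $\{\vartheta_i\}$, and define $\eta_U$ by the inductive scheme of step III of Lemma \ref{retrsigma}. The properties $\eta_U(\cdot,0) = \text{Id}$ and $E(\eta_U(\beta,t)) - E(\beta) \leq -\nu t$ are inherited exactly as there; after possibly shrinking $\bar t$ one ensures $E \circ \eta_U \geq a$, so that the image lies in $\Omega_a^b$.
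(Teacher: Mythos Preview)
Your proposal takes a different route from the paper and has two genuine gaps.

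The first is in the uniformity-by-contradiction step. You extract a weak-$H^1$ limit $\bar\beta$ of a sequence $\beta_n$ with vanishing descent rate and then assert strong convergence ``by the energy-convergence computation in Lemma \ref{comp}''. That computation, however, was carried out for explicit affine reparametrizations of the fixed geodesic $\gamma_0$: the identity $\|\gamma_n\|=L_1^2/(\sigma\tau_n)+L_2^2/((1-\sigma)(1-\tau_n))$ depended entirely on the special form of $\gamma_n\in\Sigma_0$. For arbitrary $\beta_n\in\Omega_a^b\smallsetminus U$ no such formula is available, and weak convergence together with an energy bound does not give strong $H^1$ convergence. Without it you cannot conclude $\beta_n\in U$ eventually, nor even that $E(\bar\beta)\geq a$. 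What is actually required here is a Palais--Smale argument, and that is precisely what the paper invokes.

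The second gap is in the patching. Step III of Lemma \ref{retrsigma} relied on the compactness of $\Sigma_0$ to pass to a finite cover $B_1,\dots,B_N$ and run a finite induction $\eta_1,\dots,\eta_N$. The set $\Omega_a^b\smallsetminus U$ is not compact in this infinite-dimensional setting, so the finite scheme does not apply; your parenthetical about ``restricting to a relatively compact piece'' does not repair this. A locally finite cover alone is not enough to run that particular induction without further structure (this is exactly the distinction the appendix draws between Lemma \ref{cap2defC} and Lemma \ref{cap2defX-C}).

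The paper sidesteps both problems by building and integrating a Lipschitz pseudo-gradient vector field rather than patching discrete local flows. Away from $\Sigma$ the functional $E$ is smooth and satisfies Palais--Smale, so $-\nabla E$ serves directly; on a neighbourhood of $\Sigma$ outside $U$ one uses variations $w_\gamma$ supported in $\{s:\gamma(s)\neq v\}$, for which $dE(\gamma)[w_\gamma]$ is well defined and strictly negative because $\gamma\notin\Sigma_0$. A continuity estimate in local coordinates propagates this to a neighbourhood of each $\gamma$, a standard partition of unity glues the $w_\gamma$'s into a global Lipschitz field $F$, and $\eta_U$ is the flow of $F$. No compactness of the domain and no reparametrization flows are used in this lemma; the reparametrization device is reserved for the compact set $\Sigma_0$ in Lemma \ref{retrsigma}.
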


\begin{proof}
We look for a pseudo gradient
vector field $F$, s.t., if $\eta_U$ is a solution of

\begin{equation}
\label{eta}
\left\{
\begin{array}l
\dot\eta_U(t,\gamma)=F(\gamma)\\
\eta_U(0,\cdot)=\text{Id}
\end{array}
\right.
\end{equation}
then
\begin{equation}
\label{stima}
\exists \nu>0 \text{ s.t } E(\eta_U(t,\gamma))-E(\gamma)<-\nu t.
\end{equation}

For every $S$ neighborhood of $\Sigma$, we have that $-\nabla E$ is
a good gradient field on $\Omega_a^b\smallsetminus S$, in fact $E$ is smooth
and satisfies the Palais Smale condition outside $S$, so, for
$\Omega_a^b\smallsetminus S$ does not contain
critical points of $E$, we know that there exists a $\nu_0\in\R^+$ s.t.
\begin{equation}
-||\nabla E||^2<-\nu_0;
\end{equation}
by integrating (\ref{eta}) with $F=-\nabla E$ we have that
\begin{equation}
E(\eta_U(\gamma,t))-E(\gamma)<-\nu_0 t
\end{equation}
if $\eta_U(\gamma,t)\subset
\Omega_a^b\smallsetminus S$ for all $t$.

Now let $S_1$ be a neighborhood of $S$ and let $U$ be a neighborhood
of $\Sigma_0$:
we look for a pseudo gradient vector
field on $S_1\smallsetminus U$.
Although $E$ is non smooth,
we can define $d E(\gamma)[w]$ for every $\gamma\in \Sigma\smallsetminus U$,
and for a suitable choice of $w$. It
is sufficient to take $w$ vector field along $\gamma$ with
\begin{displaymath}
\text{spt }w\subset \{s \text{ s.t. }\gamma(s)\neq v\}.
\end{displaymath}

There exists $\nu_1$ such that for every $\gamma\in\Sigma\smallsetminus U$
we can find $w_\gamma$ for which $d E(\gamma) [w_\gamma]
<-2\nu_1$. This is possible because we can find a partition
$0=s_0<\dots<s_k=1$ such that $\gamma(s_i)=v$ and $v\notin$Im
$\gamma|_{(s_i,s_{i+1})}$. Called
$\gamma_i=\gamma|_{(s_i,s_{i+1})}$ we can shorten it by a vector
field $w_i$ along $\gamma_i$, leaving its extremal point fixed, so
we obtain a vector field $w_\gamma$ along $\gamma$ with
\begin{displaymath}
\text{spt }w_\gamma\subset \{s \text{ s.t. }\gamma(s)\neq v\},
\end{displaymath}
and
\begin{equation}
d E(\gamma) [w_\gamma] <-2\nu_1,
\end{equation}
in fact for these variations the (P.S.) condition for energy holds. Moreover,
$\Sigma\smallsetminus U$ does not contain any stationary point for these
kind of variations.

Without loss of generality suppose now that exists a global chart
$(V,\phi)$, $0\in V\subset \R^n$ s.t. $\phi(0)=v$. The metric of
$M$, read on $V$, lead us to consider a matrix $(g_{ij}(x))_{ij}$
whose coefficients are discontinuous at 0; if $\gamma$ is a path
on $V$ we can compute is energy by taking

\begin{equation}
E(\gamma)=\int g_{ij}(\gamma)\gamma'_i\gamma'_j ds.
\end{equation}

For the sake of simplicity we suppose also that
\begin{displaymath}
g_{ij}(x)=g(x)\delta_{ij}(x)
\end{displaymath}
where $\delta_{ij}$ are the coefficient of Euclidean metric. The
general case does not present further difficulties.

Now we pass to coordinates $(V,\phi)$. Because $\gamma\in \Omega$, if
$||\gamma-\gamma_1||_{\Omega}<\eps$ then there exists $C\in\R^+$ s.t.
$||\gamma-\gamma_1||_{L^\infty}<C \eps$ by the Sobolev immersion,
so also $||\phi(\gamma)-\phi(\gamma_1)||_{L^\infty}<C \eps$.

In coordinates $d E (\gamma)[w] $ has the
following form:
\begin{equation}
d E (\gamma)[w]=\int g(\gamma)\gamma' w' ds + \int <\nabla g,w>
|\gamma'|^2 ds,
\end{equation}
where $w\in H^1(I,V)$.
Note that, even if $\nabla g$ does not exist everywhere, it is well defined
on spt $w$.

We have proved that for every $\gamma \in \Sigma\smallsetminus U$
exists $w_\gamma$ s.t.
$d E(\gamma) [w_\gamma] <-2\nu_1$; obviously we can prove the same for every
$\gamma \in S_1\smallsetminus U$. Given
$\gamma \in S_1\smallsetminus U$ and $w_\gamma$ as
above, it exists
a neighborhood $V_\gamma$ of $\gamma$ s.t.
\begin{equation}
\label{stima2}
\forall \gamma_1\in V_\gamma\,\,
d E(\gamma_1) [w_\gamma] <-\nu_1.
\end{equation}

Let $||\gamma-\gamma_1||_{H^1}<\eps$, then
\begin{eqnarray*}
\int g(\gamma)\gamma' w'_\gamma &-&
\int g(\gamma_1)\gamma'_1 w'_\gamma \leq\\
\leq\int g(\gamma)\left(\gamma'-\gamma'_1\right)w'_\gamma&+&
\int \left(g(\gamma)-g(\gamma_1)\right)\gamma'_1w'_\gamma\leq\\
\leq\sup\limits_{t\in\text{spt }w_\gamma}g(\gamma)
||\gamma'-\gamma'_1||_{L^2}||w'||_{L^2}&+&
\sup\limits_{t\in\text{spt
}w_\gamma}\left[g(\gamma)-g(\gamma_1)\right]
||\gamma'_1||_{L^2}||w'||_{L^2}\leq\\
&\leq& \text{Const }\cdot\eps,
\end{eqnarray*}
in fact $g(\gamma)\in C^\infty(\text{spt } w_\gamma)$, so $\sup g(\gamma)$
is bounded; furthermore,because $||\gamma-~\gamma_1||_{L^\infty}<C\cdot\eps$,
$\sup\left[g(\gamma)-g(\gamma_1)\right]\leq~C\cdot~\eps$.

In the same way
\begin{eqnarray*}
\int<\nabla g(\gamma),w_\gamma>|\gamma'|^2&-&
\int<\nabla g(\gamma_1),w_\gamma>|\gamma'_1|^2\leq\\
\int<\nabla g(\gamma),w_\gamma>(|\gamma'|^2-|\gamma'_1|^2)&+&
\int<\nabla g(\gamma)-\nabla g(\gamma_1),w_\gamma>|\gamma'_1|^2\leq\\
&\leq& \text{Const }\cdot\eps.
\end{eqnarray*}

So $d E(\gamma)[w_\gamma]-d E(\gamma_1)[w_\gamma]\leq C\cdot\eps$: we can
choose a neighborhood $V_\gamma$, for all $\gamma\in S_1\smallsetminus U$,
s.t.
\begin{equation}
d E(\gamma_1)[w_\gamma]<-\nu_1\,\,\forall \gamma_1 \in V_\gamma.
\end{equation}
The sets $V_\gamma$ covers the whole $S_1\smallsetminus U$.
Let $V_{\gamma_i}$ be a locally
finite refinement of $V_\gamma$.
Let $\beta_i$ be a partition of the unity associated to $V_{\gamma_i}$. Then
\begin{equation}
F_1=\sum\beta_i w_{\gamma_i}
\end{equation}
is a pseudo-gradient vector field on $S_1\smallsetminus U$
(for the details of such a
construction see \cite{Rab74}). Now let $\alpha_j$ be a partition
of the unity associated to
$S_1\smallsetminus U,\Omega_a^b\smallsetminus S$, then
\begin{equation}
F=\alpha_1 F_1 -\alpha_2\nabla E
\end{equation}
is the vector field we looked for, in fact we can find $\eta_U$
because $F$ is a Lipschitz
vector field by definition. Even if
$E$ isn't smooth, we can differentiate it along the direction of F,
so
\begin{eqnarray*}
E(\eta_U(\gamma,t))-E(\gamma)&=&
\int\limits_0^t \frac{d}{d\tau}E(\eta_U(\gamma,\tau)) d\tau=\\
=\int\limits_0^t dE\left(\eta_U(\gamma,\tau)\right)
\left[\dot\eta_U(\gamma,\tau)\right]d\tau&=&
\int\limits_0^t dE\left(\eta_U(\gamma,\tau)\right)[F].
\end{eqnarray*}
Let $\nu=\min(\nu_0,\nu_1)$. Then
\begin{eqnarray*}
E(\eta_U(\gamma,t))&-&E(\gamma)=\\
=\int_0^t\alpha_1dE\left(\eta_U(\gamma,\tau)\right)[F_1]&-&
\alpha_2||\nabla E\left(\eta_U(\gamma,\tau)\right)||^2=\\
=\int_0^t\alpha_1\sum\beta_idE\left(\eta_U(\gamma,\tau)\right)[w_{\gamma_i}]
&-&\alpha_2||\nabla E\left(\eta_U(\gamma,\tau)\right)||^2\leq\\
\leq\int_0^t-\alpha_1\sum\beta_i\nu_1&-&\alpha_2\nu_0\\
\leq-\int_0^t\nu&\leq&-\nu t.
\end{eqnarray*}
\end{proof}

From lemma \ref{retrsigma} and lemma \ref{retru} we get the
following result.

\begin{teo}\label{gamma0}
Let $M$ be a conical manifold with only a vertex $v$,
and consider the special closed geodesic
$\gamma_0$ for which there exists an unique $\sigma$
s.t. $\gamma_0(\sigma)=v$. Set $E(\gamma_0)=c_0$.
Suppose that there exist $a,b\in\R$, $c_0<a<b$ s.t. $\Omega^b$
contains only the geodesics
$\gamma_0$.

Then $\Omega^b\simeq\Omega^a$.
\end{teo}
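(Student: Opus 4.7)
The plan is the classical Lusternik--Schnirelman deformation-retract strategy: combine the two local energy-decreasing deformations $\eta_R$ of Lemma \ref{retrsigma} and $\eta_U$ of Lemma \ref{retru} into one continuous one-parameter family on $\Omega^b$ that strictly decreases $E$ at a uniform rate $\nu>0$ throughout the strip $\Omega_a^b$, and then follow it long enough to push every path below energy level $a$. Since $\Sigma_0$ is the only obstruction to a pseudo-gradient on $\Omega_a^b$, and the two lemmas handle a neighborhood of $\Sigma_0$ and its complement respectively, a partition-of-unity patching is the natural device.

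Concretely, I first apply Lemma \ref{retrsigma} to obtain $R\supset\Sigma_0$ with flow $\eta_R$ and constants $\nu_R,\bar t_R>0$; then I pick an open $U$ with $\Sigma_0\subset U$ and $\overline U\subset R$ and apply Lemma \ref{retru} to get $\eta_U$ with $\nu_U,\bar t_U>0$. Set $\nu:=\min(\nu_R,\nu_U)$ and $\bar t:=\min(\bar t_R,\bar t_U)$. Let $\{\psi_R,\psi_U\}$ be a continuous partition of unity subordinate to $\{R,\Omega^b\setminus\overline U\}$ with $\psi_R\equiv 1$ on $\overline U$, and define the one-step deformation
\[
\eta(\beta,t)\ :=\ \eta_R\!\bigl(\eta_U(\beta,\psi_U(\beta)t),\,\psi_R(\eta_U(\beta,\psi_U(\beta)t))\,t\bigr),
\]
with the convention that each of $\eta_R,\eta_U$ acts by the identity outside its domain. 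Mimicking the compounding used in step III of the proof of Lemma \ref{retrsigma} (shrinking $\bar t$ if necessary), one checks that $\eta$ is continuous, $\eta(\cdot,0)=\mathrm{Id}$, stays inside $\Omega^b$, and satisfies $E(\eta(\beta,t))-E(\beta)\le-\nu t$ for $\beta\in\Omega_a^b$.

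Given this uniform-descent flow, the retraction is standard. Define the continuous stopping time $T(\beta):=\min\bigl(\bar t,\,\max(E(\beta)-a,0)/\nu\bigr)$; note $T\equiv0$ on $\Omega^a$. Iterate $\Phi_0(\beta):=\beta$, $\Phi_{k+1}(\beta):=\eta(\Phi_k(\beta),T(\Phi_k(\beta)))$; after at most $N:=\lceil(b-a)/(\nu\bar t\,)\rceil$ iterations one has $\Phi_N(\beta)\in\Omega^a$ for every $\beta\in\Omega^b$, while points of $\Omega^a$ are fixed throughout the iteration. Concatenating the $N$ steps and reparametrizing time to $[0,1]$ produces a continuous homotopy $H\colon\Omega^b\times[0,1]\to\Omega^b$ with $H(\cdot,0)=\mathrm{Id}$, $H(\Omega^b,1)\subset\Omega^a$ and $H|_{\Omega^a\times[0,1]}=\mathrm{Id}$. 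Thus $\Omega^a$ is a strong deformation retract of $\Omega^b$, so $\Omega^b\simeq\Omega^a$.

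The main technical obstacle lies in the patching: $\eta_R$ is built from reparametrizations of the path interval while $\eta_U$ is the integral flow of a pseudo-gradient vector field on $\Omega$, so these are objects of different nature that cannot be convex-combined infinitesimally. The patching must therefore be done at the level of time-reparametrized compositions, mirroring the finite-covering argument of step III of Lemma \ref{retrsigma}, and one must verify that the uniform descent rate (possibly slightly shrunk) survives in the transition region $R\setminus\overline U$ where both flows contribute. Compactness of $\Sigma_0$ (Lemma \ref{comp}) is essential, as it keeps the covering finite and the resulting composition continuous.
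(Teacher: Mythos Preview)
Your proposal follows the same overall strategy as the paper: invoke Lemma~\ref{retrsigma} and Lemma~\ref{retru} to obtain two energy--decreasing deformations, glue them, and iterate across the strip $[a,b]$. The one substantive difference is the gluing mechanism. You compose $\eta_R$ and $\eta_U$ in a single step via a spatial partition of unity $\{\psi_R,\psi_U\}$; as you yourself flag, this makes the uniform--descent bound in the overlap $R\setminus\overline U$ delicate, because in your formula $\psi_R$ is evaluated at the displaced point $\eta_U(\beta,\psi_U(\beta)t)$ rather than at $\beta$, so $\psi_U(\beta)+\psi_R(\cdot)$ need not sum to~$1$ and an equicontinuity estimate on $\eta_U$ is required to recover a fixed rate. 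The paper avoids this entirely by \emph{alternating} rather than composing: first $\mu_1(\beta,t)=\eta_U(\beta,\theta_1(\beta)t)$ with a spatial cutoff $\theta_1$ (vanishing on $U$, equal to~$1$ outside an intermediate $V\subset R$), which drives $\Omega^b$ into $V\cup\Omega^{b-\nu}$; then $\mu_2(\beta,t)=\eta_R(\beta,\theta_2(\beta)t)$ with an \emph{energy--level} cutoff $\theta_2$ (equal to~$1$ on $\Omega^b_{b-\nu/2}$, vanishing on $\Omega^{b-\nu}$), which drives $V\cup\Omega^{b-\nu}$ into $\Omega^{b-\nu/2}$. Since the two cutoffs live on different ``axes'' (position in $\Omega$ versus value of $E$), each half--step has a clean, unmixed descent estimate and no displacement control on $\eta_U$ is needed. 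Your route is workable once the overlap estimate is carried out, but the paper's alternating trick is the simpler device and is worth knowing.
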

\begin{proof}
Given $R$ as in lemma \ref{retrsigma},
we choose $U$ and $V$ neighborhoods of $\Sigma_0$ s.t.
$$
\Sigma_0\subsetneq U\subsetneq V\subsetneq R.
$$
we know that, for such an $U$, there exists a retraction $\eta_U$
defined as in Lemma \ref{retru}. For the sake of simplicity we
will suppose that $\eta_U$ and $\eta_R$ (see Lemma
\ref{retrsigma}) are defined for $0\leq t \leq 1$ and that $\nu$
is the same for both of them. Let
$\theta_1:\Omega^b\rightarrow[0,1]$ a continuous map s.t.
\begin{eqnarray*}
\theta_{1}|_U&\equiv&0\\
\theta_{1}|_{\Omega^b\smallsetminus V}&\equiv&1.
\end{eqnarray*}
Then we define a continuous map
\begin{equation}
\mu_1:\Omega^b\times[0,1]\rightarrow\Omega^b,
\end{equation}
\begin{equation}
\mu_1(\beta,t)=\eta_U(\beta,\theta_1(\beta)t);
\end{equation}
we know that $E(\mu_1(\beta,t))-E(\beta)\leq-\nu t\theta_1(\beta)$, so
\begin{displaymath}
\mu_1(\Omega^b,1)\subset V\cup\Omega^{b-\nu},
\end{displaymath}
in fact if $\mu_1(\beta,t)\notin V$ for all $t$, then
$E(\mu_1(\beta,t))-E(\beta)\leq-\nu t$, so
$\mu_1(\beta,1)\in\Omega^{b-\nu}$.

By $\mu_1$ we have retracted $\Omega^b$ on
$\Omega^{b-\nu}\cup V$; now we define a continuous map
$\theta_2:\Omega^b\rightarrow[0,1]$ s.t.
\begin{eqnarray*}
\theta_{1}|_{\Omega^b_{b-\nu/2}}&\equiv&1,\\
\theta_{1}|_{\Omega^{b-\nu}}&\equiv&0.
\end{eqnarray*}
Then set
\begin{equation}
\mu_2:V\cup\Omega^{b-\nu}\times[0,1]\rightarrow\Omega^b
\end{equation}
\begin{equation}
\mu_2(\beta,t)=\eta_R(\beta,\theta_2(\beta)t);
\end{equation}
$\mu_2$ is a continuous map that retracts $V\cup\Omega^{b-\nu}$ on
$\Omega^{b-\nu/2}$. By iterating this algorithm we can retract
continuously $\Omega^b$ on $\Omega^a$.
\end{proof}

Now we can prove the deformation lemma.

\begin{proof}[Proof of Theorem \ref{maindeflemma}]
Let $\{\gamma_i\}_{i=1,\cdots,N}$ be the set of geodesics in $\Omega^b$.
We start defining some special subset of $\Omega_a^b$,
as in (\ref{sigma}) and (\ref{sigma0}); let
\begin{equation}
\Sigma=\{\gamma\in \Omega_a^b,
\text{ s.t. Im}\gamma\cap V\neq\emptyset\}
\end{equation}
(we recall that $V$ is the set of vertexes); for $i=1,\cdots N$, set
\begin{equation}
\Sigma_i=\{\gamma\in\Sigma\text{ s.t. }\gamma=\gamma_i
\text{ up to affine reparametrization }\}.
\end{equation}
We note that for $i\neq j$ then $\Sigma_i\cap\Sigma_j=\emptyset$, because the
geodesics are different. For
these $\Sigma_i$ we can find a retraction $\eta_{\Sigma}$ as in
lemma \ref{retrsigma}: indeed, for every
$U\supset\bigcup\limits_{i=0}^N\Sigma_i$ there
exists a retraction $\eta_U$
on $\Omega_a^b\smallsetminus U$ in analogy
with lemma \ref{retru}. Finally, we compound these two maps
$\eta_{\Sigma}$ and $\eta_U$ following the proof of
theorem \ref{gamma0} and we conclude.
\end{proof}

\begin{teo}[Second deformation lemma]\label{deflemma2}
Let $M$ be a
conical manifold, $p\in M$. Suppose that there exists $c\in\R$ s.t.
$\Omega^c$ contains only a finite number of geodesics and that
there exists $a,b\in R$, $a<b<c$ s.t. the strip $[a,b)$
contains only regular values of $E$. Set
$Z$ the set of geodesics and $Z_b=Z\cap E^{-1}(b)$,
then there exists a neighborhood $U$ of
$Z_b$ s.t.
\begin{displaymath}
\Omega^b\smallsetminus U\simeq\Omega^a.
\end{displaymath}
\end{teo}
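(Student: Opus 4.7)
The plan is to follow the blueprint of Theorem \ref{maindeflemma}, with $Z_b=\{\gamma_1,\dots,\gamma_N\}$ playing the role of the critical set and lying now at the top of the strip rather than strictly above it.

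First I would unwind the local structure of each critical geodesic. For each $\gamma_i\in Z_b$ passing through a vertex, form its reparametrization orbit $\Sigma_{0,i}$ as in (\ref{sigma0}); the convex computation already carried out in the proof of Lemma \ref{comp} shows that $\gamma_i$ is the unique minimizer of $E$ on $\Sigma_{0,i}$, so every other representative has energy strictly greater than $b$ and $\Sigma_{0,i}\cap\Omega_a^b=\{\gamma_i\}$. For smooth $\gamma_i\in Z_b$ set $\Sigma_{0,i}=\{\gamma_i\}$. Applying Lemma \ref{retrsigma} around each singular orbit and the classical $-\nabla E$ construction around each smooth one, and patching via Lemma \ref{retru} as in the closing paragraph of the proof of Theorem \ref{maindeflemma}, for every open neighborhood $W\supset\bigcup_i\Sigma_{0,i}$ one obtains a continuous flow
\[
\eta_W\colon(\Omega_a^b\setminus W)\times[0,\bar t]\to\Omega_a^b,\qquad E(\eta_W(\beta,t))-E(\beta)\leq-\nu t,
\]
generated by a Lipschitz vector field, with $\nu,\bar t>0$ independent of $\beta$.

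Next I would reduce to the first deformation lemma. Fix $\epsilon\in(0,\nu\bar t)$ with $a<b-\epsilon$; by Theorem \ref{maindeflemma}, $\Omega^{b-\epsilon}\simeq\Omega^a$, so it is enough to deformation-retract $\Omega^b\setminus U$ onto $\Omega^{b-\epsilon}$. Choose a small open neighborhood $W_0$ of $Z_b$ contained in the union of the retraction regions supplied by Lemma \ref{retrsigma}, with $\overline{W_0}$ disjoint from every geodesic outside $Z_b$ (possible because $\Omega^c$ contains only finitely many geodesics). Let $U$ be the forward-saturation of $W_0$ under $\eta_{W_0}$ up to time $\epsilon/\nu$, i.e.\ the union of $W_0$ with those $\beta\in\Omega_a^b\setminus W_0$ whose flow line $\eta_{W_0}(\beta,\cdot)$ hits $W_0$ before time $\epsilon/\nu$. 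By continuity of the flow $U$ is open and contains $Z_b$; by the semigroup property of $\eta_{W_0}$, the complement $\Omega^b\setminus U$ is forward-invariant along $\eta_{W_0}$ up to time $\epsilon/\nu$.

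The map $\mu(\beta,s)=\eta_{W_0}(\beta,s\epsilon/\nu)$---extended by the identity on $\Omega^a\cap(\Omega^b\setminus U)$ via a continuous energy cutoff to handle the lower boundary of the domain of $\eta_{W_0}$---then defines a deformation $(\Omega^b\setminus U)\times[0,1]\to\Omega^b\setminus U$ with $\mu(\cdot,0)=\mathrm{Id}$ and $\mu(\cdot,1)\subset\Omega^{b-\epsilon}$, since on the flowing part $E(\mu(\beta,1))\leq E(\beta)-\epsilon\leq b-\epsilon$. The main obstacle is to verify that $U$ is a genuine (proper) neighborhood of $Z_b$: this is where the Lipschitz bound on the vector field driving $\eta_{W_0}$ (Lemma \ref{retru}) enters, capping the $H^1$-displacement of any trajectory of time-length $\epsilon/\nu$ by a constant $C\epsilon/\nu$, so that $U$ sits inside the $H^1$-tube of radius $C\epsilon/\nu$ about $\overline{W_0}$, which remains a proper subset of $\Omega^b$ once $W_0$ is chosen tight enough. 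Composing $\mu(\cdot,1)$ with the equivalence $\Omega^{b-\epsilon}\simeq\Omega^a$ of Theorem \ref{maindeflemma} then produces $\Omega^b\setminus U\simeq\Omega^a$.
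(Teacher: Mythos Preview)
The paper's own proof is a single sentence: ``We can prove this corollary following the lines of Theorem \ref{maindeflemma}.'' Your attempt is therefore a genuine effort to supply the missing details, and the overall plan---excise a neighborhood of $Z_b$, flow the remainder down by one energy step, then invoke Theorem \ref{maindeflemma}---is the right one and is what the paper has in mind.

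There is, however, a real confusion in your first paragraph. You write ``Applying Lemma \ref{retrsigma} around each singular orbit'' for the orbits $\Sigma_{0,i}$ attached to $\gamma_i\in Z_b$. But Lemma \ref{retrsigma} produces a flow that \emph{strictly decreases} energy on a neighbourhood $R\supset\Sigma_0$; it is stated and proved under the standing hypothesis $c_0<a$, so that the geodesic $\gamma_0$ lies \emph{below} the strip and the reparametrisation flow can push every point of $\Sigma_0$ down toward it. When $\gamma_i$ has energy $b$ it is the minimiser on its own orbit, the reparametrisation flow of Lemma \ref{retrsigma} is stationary there, and the estimate $E(\eta_R(\beta,t))-E(\beta)<-\nu t$ fails at $\beta=\gamma_i$. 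What actually supplies the descent direction in a punctured neighbourhood of a singular $\gamma_i\in Z_b$ is the pseudo-gradient of Lemma \ref{retru} (the variations $w_\gamma$ supported away from the vertex), not Lemma \ref{retrsigma}. Lemma \ref{retrsigma} is still needed in the strip, but for the reparametrisation orbits of the \emph{lower-energy} geodesics (those with $E<a$), which your write-up never mentions.

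The second issue is the claim that the resulting $\eta_W$ is ``generated by a Lipschitz vector field'' with a ``semigroup property''. Only the Lemma \ref{retru} component is an ODE flow; the Lemma \ref{retrsigma} component is a hand-built reparametrisation map, and in the paper the two are glued by the alternating $\mu_1/\mu_2$ scheme of Theorem \ref{gamma0}, not by a single vector field. Your forward-saturation construction of $U$ and the asserted forward-invariance of $\Omega^b\setminus U$ both lean on this semigroup property, so as written there is a gap. The idea is salvageable---near $Z_b$ the deformation \emph{is} the Lipschitz flow of Lemma \ref{retru}, so a displacement bound does control how far trajectories can drift toward $Z_b$, while the Lemma \ref{retrsigma} piece acts only near the lower-energy orbits, far from $Z_b$---but this needs to be argued with the correct attribution and without invoking a global semigroup.
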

\begin{proof}
We can prove this corollary following the lines of Theorem \ref{maindeflemma}.
\end{proof}

As previously said, Lemma \ref{retrsigma}, which is crucial for this work,
is based on a generalization of \cite[theorem 2.8]{CDM93}. Indeed, using a
slight modification of the weak slope tool, this result and the
deformation lemmas can be reformulated in a more general context. This
theoretic frame is briefly discussed in the appendix.
\section{Category theory}

First, we recall some well known results relative to
the Lusternik and Schnirelmann
category. This theory was presented in \cite{LS34} in a finite
dimensional framework, then generalized to Banach manifold by R. Palais
\cite{Pal66a}.
\begin{defin}
Let $X$ be a topological space, $A\subset X$. If $A\neq\emptyset$ we say
that
\begin{displaymath}
\cat A=\cat_XA=k\;\;\text{ iff}
\end{displaymath}
$k$ is the least integer for which there are $F_1,\cdots,F_k$
closed contractible subsets of $X$ s.t. $\bigcup_kF_k$ covers $A$.

We define also
\begin{displaymath}
\cat\emptyset=\cat_X\emptyset=0.
\end{displaymath}
\end{defin}

\begin{teo}\label{maincat}
Let $X$ be a topological space. Then
\begin{enumerate}
\item{if $A\subset B\subset X$ then $\cat_XA\leq\cat_XB$;}
\item{if $A,B\subset X$ then $\cat_XA\cup B\leq\cat_XA+\cat_XB$;}
\item{if $A,B\subset X$, $A$ closed, and there is
$\eta\in C([0,1]\times A,X)$ s.t.
\begin{eqnarray*}
&&B=\eta(1,A);\\
&&\eta(0,u)=u\;\;\forall u\in A,
\end{eqnarray*}
then $\cat_XA\leq\cat_XB$}
\item{if $Y$ is a topological space, $y\in Y$, then
$\cat_{X+Y}(A\times\{y\})+\cat_XA$}.
\end{enumerate}
\end{teo}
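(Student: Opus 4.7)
All four items are standard facts about the Lusternik--Schnirelmann category, so the plan is mostly to recall the classical arguments. Throughout, let $k = \cat_X B$ (resp. $\cat_X A$) and choose an optimal closed contractible cover $F_1,\dots,F_k$ of $B$ (resp. $A$).

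For (1) and (2), the arguments are immediate: any closed contractible cover of $B$ in $X$ also covers $A\subset B$, giving (1); and the union of an optimal cover of $A$ with one of $B$ is a closed contractible cover of $A\cup B$ whose cardinality realises the sum, giving (2). Similarly for (4), if $F_1,\dots,F_k$ are closed contractible in $X$ and cover $A$, then $F_i\times\{y\}$ are closed and contractible in $X\times Y$ (contract $F_i$ in $X$ and stay fixed at $y$) and their union covers $A\times\{y\}$.

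The only item requiring actual work is (3), and this is where I would spend the effort. The plan is: let $k=\cat_X B$ and take closed contractible sets $F_1,\dots,F_k\subset X$ with contractions $H_i:[0,1]\times F_i\to X$, $H_i(0,\cdot)=\mathrm{id}_{F_i}$, $H_i(1,\cdot)\equiv x_i$, and with $B\subset\bigcup_i F_i$. Define
\begin{equation*}
A_i := \{u\in A : \eta(1,u)\in F_i\},\qquad i=1,\dots,k.
\end{equation*}
Since $\eta(1,\cdot):A\to X$ is continuous and $F_i$ is closed, each $A_i$ is closed in $A$, hence (as $A$ is closed in $X$) closed in $X$; and the $A_i$ cover $A$ because $\eta(1,A)=B\subset\bigcup F_i$. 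It then remains to contract $A_i$ inside $X$, which is done by concatenating the given deformation with the contraction of $F_i$:
\begin{equation*}
K_i(t,u)=\begin{cases}\eta(2t,u), & t\in[0,1/2],\\ H_i(2t-1,\eta(1,u)), & t\in[1/2,1],\end{cases}
\end{equation*}
so that $K_i(0,u)=u$ and $K_i(1,u)=x_i$. Continuity at $t=1/2$ follows from $H_i(0,\eta(1,u))=\eta(1,u)$. Hence each $A_i$ is contractible in $X$, and $\cat_X A\le k=\cat_X B$.

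The main (and only) subtle point is the one just addressed: one must ensure that the sets $A_i$ used to cover $A$ are closed in $X$ and contractible in $X$. The closedness requires the hypothesis that $A$ itself be closed (otherwise $A_i$ is only closed in $A$, not in $X$), which is exactly why this hypothesis appears in the statement; the contractibility requires that $\eta$ be defined on all of $[0,1]\times A$ so that the concatenation with $H_i$ is well-defined and continuous. Once these are in place, the argument is routine and the theorem follows.
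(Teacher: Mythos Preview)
Your proof is correct and follows exactly the same approach as the paper: items (1), (2), (4) are dismissed as trivial, and for (3) the paper defines the same sets $C_i=\{u\in A:\eta(1,u)\in F_i\}$ and asserts they are closed and contractible. In fact you supply more detail than the paper does, explicitly writing out the concatenated homotopy $K_i$ and explaining why the closedness hypothesis on $A$ is needed.
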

\begin{proof}
The points 1,2 and 4 are trivial. We have only to prove 3.

By hypothesis, we can find $F_1,\cdots,F_k$ s.t.
$B\subset F_1\cup\cdots\cup F_k$. Set
\begin{displaymath}
C_i=\left\{
u\in A\text{ s.t. }\eta(1,u)\in F_i
\right\}.
\end{displaymath}
Obviously, $C_i$ are closed and contractible. Since
$C_1\cup\cdots\cup C_k$ covers $A$ we obtain the thesis.
\end{proof}

By Theorem \ref{deflemma2} we are able to reconstruct the category theory
for the energy functional defined on a conical manifold.
\begin{lemma}
Let $M$ be a conical manifold, $p\in M$. Suppose that there exists
$\bar c\in\R$ s.t.
$\Omega^{\bar c}$ contains only a finite number of geodesics.
Let $c<\bar c$ a critical level for $E$.
Then, set $U$ a neighborhood of $Z_c$ there exists $\eps>0$ s.t.
\begin{equation}
\cat\Omega^{c+\eps}\leq\cat\Omega^{c-\eps}+\cat U.
\end{equation}
\end{lemma}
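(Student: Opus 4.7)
The plan is a standard Lusternik--Schnirelmann covering argument, modeled on the proofs of Theorems \ref{gamma0} and \ref{maindeflemma}: build a deformation of $\Omega^{c+\eps}\smallsetminus U$ into $\Omega^{c-\eps}$ for an appropriately chosen neighborhood $U$ of $Z_c$, and then split $\Omega^{c+\eps}$ as $U\cup(\Omega^{c+\eps}\smallsetminus U)$.

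First I fix the parameters. Since $\Omega^{\bar c}$ contains only finitely many geodesics, only finitely many critical values of $E$ lie below $\bar c$, so I can choose $\eps>0$ with $c+\eps<\bar c$ and with $c$ the unique critical value of $E$ in $[c-\eps,c+\eps]$. For each $\gamma_i\in Z_c$ let $\Sigma_i$ be the corresponding set of broken-geodesic reparametrizations (as in the proof of Theorem \ref{maindeflemma}); Lemma \ref{retrsigma} yields a neighborhood $R_i\supset\Sigma_i$ equipped with a retraction that strictly decreases $E$. I take $U$ to be a sufficiently large open neighborhood of $Z_c$, say $U\supset\bigcup_i R_i$, shrunk so that $\overline{U}$ is disjoint from $Z\smallsetminus Z_c$.

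Next I construct a continuous map
$$\eta\colon[0,1]\times(\Omega^{c+\eps}\smallsetminus U)\to\Omega^{c+\eps},\quad\eta(0,\cdot)=\mathrm{id},\quad\eta(1,\Omega^{c+\eps}\smallsetminus U)\subset\Omega^{c-\eps},$$
by splicing Lemma \ref{retrsigma} and Lemma \ref{retru} exactly as in the proof of Theorem \ref{gamma0}. Away from $U$, Lemma \ref{retru} supplies a pseudo-gradient flow that strictly decreases $E$ by a uniform amount $-\nu t$; on a tubular region around $\bigcup_i\Sigma_i$ (contained in $U$), Lemma \ref{retrsigma} provides the compensating retraction. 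Iterating the $\mu_1,\mu_2,\dots$ construction of Theorem \ref{gamma0}, each starting path is either pushed into $U$ (and removed from further consideration) or driven uniformly downward; since $c$ is the only critical value in $[c-\eps,c+\eps]$, after finitely many iterations every remaining path has energy below $c-\eps$.

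Finally I apply Theorem \ref{maincat}: part 3 applied to the closed set $\Omega^{c+\eps}\smallsetminus U$ and the deformation $\eta$ gives $\cat_\Omega(\Omega^{c+\eps}\smallsetminus U)\le\cat_\Omega\Omega^{c-\eps}$, and the subadditivity in part 2, applied to the cover $\Omega^{c+\eps}=(\Omega^{c+\eps}\smallsetminus U)\cup U$, yields $\cat\Omega^{c+\eps}\le\cat\Omega^{c-\eps}+\cat U$. The main obstacle is the deformation step, because the strip $[c-\eps,c+\eps]$ contains the critical value $c$ and so neither Theorem \ref{maindeflemma} nor Theorem \ref{deflemma2} applies verbatim; the work is to re-run the iterative splicing of the flows from Lemmas \ref{retrsigma} and \ref{retru} in this slightly different setting, verifying continuity, the containment inside $\Omega^{c+\eps}$, and the uniform energy drop that guarantees the deformation reaches $\Omega^{c-\eps}$ in finitely many steps. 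This is the same bookkeeping already carried out in Theorem \ref{gamma0}.
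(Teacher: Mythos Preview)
Your approach is essentially the paper's: deform $\Omega^{c+\eps}\smallsetminus U$ into $\Omega^{c-\eps}$ and then apply the subadditivity and deformation properties of category from Theorem~\ref{maincat}. The only difference is packaging: the paper simply invokes Theorem~\ref{deflemma2} (the second deformation lemma) to obtain that $\Omega^{c-\eps}$ is a deformation retract of $\Omega^{c+\eps}\smallsetminus U$, and then writes the one-line chain
\[
\cat\Omega^{c+\eps}\leq\cat(\Omega^{c+\eps}\smallsetminus U)+\cat U\leq\cat\Omega^{c-\eps}+\cat U.
\]
You instead re-run the splicing of Lemmas~\ref{retrsigma} and~\ref{retru} by hand. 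Your observation that the literal statement of Theorem~\ref{deflemma2} does not quite cover the interval $[c-\eps,c+\eps]$ (since the critical value sits in the interior, not at the top endpoint) is accurate; the paper is tacitly using the obvious extension whose proof is identical, and you have spelled that out.

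One genuine point to tighten: in the lemma, $U$ is \emph{given} (an arbitrary neighborhood of $Z_c$) and $\eps$ is to be found; this is what the application in Theorem~\ref{numpunticrit} needs, where $U$ is later chosen small so that $\cat U\leq\#Z_c$. You instead write ``I take $U$ to be a sufficiently large open neighborhood of $Z_c$, say $U\supset\bigcup_i R_i$'', which reverses the quantifiers. The fix is straightforward---given $U$, choose $\eps$ small and run Lemma~\ref{retru} with that $U$, while Lemma~\ref{retrsigma} supplies the retraction on the compact $\Sigma_i$'s regardless of $U$---but as written your argument proves a weaker statement than required.
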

\begin{proof}
We know, by the second deformation lemma, that
$\Omega^{c-\eps}$ is a deformation
retract of $\Omega^{c+\eps}\smallsetminus U$: applying Theorem \ref{maincat}
we obtain
\begin{equation}
\cat \Omega^{c+\eps}\leq\cat \Omega^{c+\eps}\smallsetminus U+\cat U\leq
\cat \Omega^{c-\eps}+\cat U.
\end{equation}
\end{proof}

\begin{teo}\label{numpunticrit}
Let $M$ be a conical manifold, let $p\in M$ and let $a<b\in\R$. Then
$\Omega_a^b$ contains at least $\cat \Omega^b-\cat \Omega^a$ geodesics.
\end{teo}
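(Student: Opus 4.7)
The plan is to carry out the standard Lusternik--Schnirelmann counting argument, using Theorem \ref{maindeflemma}, Theorem \ref{deflemma2} and the preceding lemma as the main ingredients.

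First I would reduce to the case in which $\Omega^b$ (and hence $\Omega_a^b$) contains only finitely many geodesics; otherwise the inequality is trivial, since the left-hand side is bounded above by $\cat\Omega^b$ and we would have infinitely many geodesics in the strip. Under this reduction the set of critical values of $E$ lying in $[a,b]$ forms a finite list $a\leq c_1<c_2<\cdots<c_k\leq b$, and each $Z_{c_i}$ (the set of geodesics at level $c_i$) is finite.

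Next, pick $\varepsilon>0$ small enough that the intervals $[c_i-\varepsilon, c_i+\varepsilon]$ are pairwise disjoint and all contained in $[a,b]$. On each of the intervening regular strips, Theorem \ref{maindeflemma} provides a deformation retraction of the upper sublevel onto the lower one, so Theorem \ref{maincat}(1) and (3) together give the chain of equalities
\[
\cat\Omega^a=\cat\Omega^{c_1-\varepsilon},\qquad \cat\Omega^{c_i+\varepsilon}=\cat\Omega^{c_{i+1}-\varepsilon}\ (1\leq i\leq k-1),\qquad \cat\Omega^{c_k+\varepsilon}=\cat\Omega^b.
\]
Across each critical level I would invoke the preceding lemma, choosing the neighborhood $U_i$ of $Z_{c_i}$ to be a disjoint union of small open $H^1$-balls, one around each geodesic in $Z_{c_i}$. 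Since $\Omega$ is locally modeled on a Hilbert space, each such ball is convex and therefore contractible, so by subadditivity of the category (Theorem \ref{maincat}(2)) one has $\cat U_i\leq\#Z_{c_i}$, and the lemma yields
\[
\cat\Omega^{c_i+\varepsilon}\leq\cat\Omega^{c_i-\varepsilon}+\#Z_{c_i}.
\]
Telescoping over $i=1,\ldots,k$ and inserting the equalities above produces
\[
\cat\Omega^b-\cat\Omega^a\leq\sum_{i=1}^k\#Z_{c_i}=\#\{\text{geodesics in }\Omega_a^b\},
\]
which is the claim.

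The main obstacle is the quiet assertion that each $\cat U_i$ can indeed be forced to be at most $\#Z_{c_i}$. This requires verifying that the finitely many geodesics in $Z_{c_i}$ are really isolated points of the Hilbert manifold $\Omega$, so that pairwise disjoint contractible $H^1$-balls around them are available; isolation follows from the finiteness of $Z_{c_i}$ combined with the fact that a convergent sequence of distinct geodesics at the same energy level would, by Lemma \ref{comp} and the continuity of the constructions above, produce a new geodesic in $\Omega^b$, contradicting finiteness. Granted this, the telescoping step is purely formal.
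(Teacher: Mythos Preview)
Your proof is correct and follows essentially the same route as the paper: reduce to finitely many geodesics, list the critical levels in $[a,b]$, apply the preceding lemma at each level and the deformation lemma on the regular strips, telescope, and bound $\cat U_i\le\#Z_{c_i}$ by taking $U_i$ to be a finite union of contractible neighborhoods. The only remark is that your final paragraph overcomplicates the isolation issue: once $Z_{c_i}$ is assumed finite it is automatically a discrete subset of the metric space $\Omega$, so disjoint contractible neighborhoods exist for elementary reasons and no appeal to Lemma~\ref{comp} is needed (the paper simply asserts ``every point has a contractible neighborhood'').
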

\begin{proof}
We suppose that there is a finite number of critical levels in $[a,b]$
(otherwise there is nothing to prove). Set
$a\leq c_0<c_1<\cdots<c_k\leq b$ these critical levels, and set,
for all $i$, $U_i$ a neighborhood
of $Z_{c_i}$. We know that there exists an $\eps$ s.t. for all $i$
\begin{equation}\label{catmainteo}
\cat \Omega^{c_i+\eps}\leq\cat \Omega^{c_i-\eps}+\cat U_i.
\end{equation}
By iterating (\ref{catmainteo}), and using the deformation lemma,
we obtain
\begin{eqnarray*}
\cat \Omega^{c_k+\eps}&\leq&\cat \Omega^{c_k-\eps}+\cat U_k\leq
\cat \Omega^{c_{k-1}+\eps}+\cat U_k\leq\\
&\leq&\cat \Omega^{c_{k-2}+\eps}+\cat U_{k-1}+\cat U_k\leq\cdots\leq\\
&\leq&\cat \Omega^{c_0-\eps}+\sum_{i=0}^k\cat U_i.
\end{eqnarray*}
Because $\cat \Omega^b\leq \cat \Omega^{c_k+\eps}$ and
$\cat \Omega^{c_0-\eps}\leq\cat \Omega^a$ we have that
\begin{equation}
\cat \Omega^b-\cat \Omega^a\leq\sum_{i=0}^k\cat U_i.
\end{equation}
Suppose now that there are a finite number of geodesics for any
critical level. Because every point has a contractible neighborhood,
we can choose $U_i$ s.t.
\begin{equation}
\cat U_i\leq \#Z_{c_i},
\end{equation}
thus
\begin{equation}
\cat \Omega^b-\cat \Omega^a\leq \sum_i\#Z_{c_i}.
\end{equation}
\end{proof}
From theorem \ref{numpunticrit} the main result of this paper follows.
\begin{cor}\label{catmaincor}
Let $M$ be a conical manifold, $p\in M$. Then there are at least
$\cat\Omega$ geodesics.
\end{cor}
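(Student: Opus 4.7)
The plan is to derive the corollary as a direct application of Theorem~\ref{numpunticrit} by letting the strip $[a,b]$ exhaust $\R$. Since the energy $E$ is non-negative and attains zero only on the constant path at $p$, any $a<0$ satisfies $\Omega^a=\emptyset$, so $\cat_\Omega\Omega^a=0$. Theorem~\ref{numpunticrit} then gives, for every $b>0$, at least $\cat_\Omega\Omega^b$ geodesics inside $\Omega^b\subset\Omega$. The total number of geodesics in $\Omega$ is therefore bounded below by $\sup_{b>0}\cat_\Omega\Omega^b$, and the corollary reduces to the identification $\sup_{b>0}\cat_\Omega\Omega^b=\cat\Omega$.

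The inequality $\cat_\Omega\Omega^b\leq\cat\Omega$ is immediate by monotonicity. For the reverse, I would argue by contradiction: assume only finitely many geodesics exist and let $c^{*}$ be the maximum of their energies. For any $\eps>0$ the strip $[c^{*}+\eps,b]$ contains only regular values of $E$ for every $b>c^{*}+\eps$, so Theorem~\ref{maindeflemma} yields that $\Omega^{c^{*}+\eps}$ is a deformation retract of every such $\Omega^b$. Piecing together these retractions---by rescaling the time variable of a globally-defined pseudo-gradient flow for $-E$ on the supercritical region (compare the construction in Lemma~\ref{retru}, which supplies the required flow on each strip $\Omega_a^b$)---one obtains a deformation retraction of the full path space $\Omega$ onto $\Omega^{c^{*}+\eps}$. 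This gives $\cat\Omega=\cat_\Omega\Omega^{c^{*}+\eps}$, and Theorem~\ref{numpunticrit} applied with $a<0$ and $b=c^{*}+\eps$ produces at least $\cat\Omega$ geodesics, contradicting the finite count. If $\cat\Omega=\infty$, the same scheme applied at successive thresholds yields infinitely many geodesics.

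The main obstacle is precisely this assembly step: the deformation lemma is formulated on a compact interval of energy levels, and the pseudo-gradient estimates in Lemma~\ref{retru} depend on constants whose uniformity along an unbounded flow is not automatic. One must verify that the strip retractions can be iterated with a summable sequence of times so that their composition defines a continuous map on $\Omega\times[0,1]$, taking care that the non-smoothness introduced by the vertices does not obstruct the gluing of successive pseudo-gradient fields. With this technical point settled, everything else reduces to a direct invocation of the deformation lemma and Theorem~\ref{numpunticrit}.
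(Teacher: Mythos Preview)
Your proposal follows the same approach as the paper's proof: dispose of the infinite-geodesic case trivially, then in the finite case use $\Omega^a=\emptyset$ for $a<0$ together with the homotopy equivalence $\Omega\simeq\Omega^b$ for $b$ large, and invoke Theorem~\ref{numpunticrit}. The paper is in fact terser than you are---it simply asserts $\Omega^b\simeq\Omega$ for $b\gg1$ as a parenthetical remark and does not carry out the assembly of strip retractions you describe, so the technical obstacle you flag is glossed over there as well (and your ``contradiction'' framing is unnecessary: once $\cat\Omega=\cat_\Omega\Omega^{c^*+\eps}$ is established, Theorem~\ref{numpunticrit} gives the result directly).
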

\begin{proof}
If there is an infinite number of geodesics, there is nothing to prove.

Otherwise,
we can apply the previous theorem and we conclude by a limiting process.
(consider that $\Omega^{-1}=\emptyset$ and that $\Omega^b\simeq\Omega$ for
$b>>1$).
\end{proof}
\section{An application}\label{sezappl}

We show a topological lemma necessary to provide some applications.

Let $X$ a smooth submanifold of $\R^{n}$. Given $g\in
L^\infty(X,\R^+)$, set
\begin{displaymath}
E(\gamma)=\int_0^1g(\gamma(s))|\gamma'|^2ds.
\end{displaymath}
We set
\begin{eqnarray*}
G(I,X)&=&\{\gamma\in C^0(I,X)\;:\:E(\gamma)
\text{ is well defined and finite}\};\\
G(S^1,X)&=&\{\gamma\in C^0(S^1,X)\;:\:E(\gamma)
\text{ is well defined and finite}\}.
\end{eqnarray*}
Obviously we have that
\begin{eqnarray}
H^1(I,X)\subset &G(I,X)&\subset C^0(I,X);\\
H^1(S^1,X)\subset &G(S^1,X)&\subset C^0(S^1,X).
\end{eqnarray}
We recall that
\begin{eqnarray*}
&&\Omega=\Omega_p X=\{\gamma\in H^1([0,1],X)\;:\:\gamma(0)=
\gamma(1)=p\};\\
&&\Omega^\infty=\Omega_p^\infty X=\{\gamma\in
C^0(S^1,X)\;:\:\gamma(0)= \gamma(1)=p\},
\end{eqnarray*}
as previously defined.
We define also the free loop space on $X$ as
\begin{eqnarray*}
&&\Lambda=\Lambda X=\{\gamma\in H^1(S^1,X)\};\\
&&\Lambda^\infty=\Lambda^\infty X=\{\gamma\in C^0(S^1,X)\}.
\end{eqnarray*}
In analogous way we set $G$ (resp. $G_p$) the subspace of
$\Lambda^\infty$ (resp. $\Omega^\infty$) in which $E$ is well
defined and finite, according with previous definitions.
These definitions allow us to formulate the following lemma.

\begin{lemma}\label{majer}
Let $X,g$ and $E(\cdot)$ be as above.
Then
\begin{eqnarray}
\cat_GG&\geq&\cat_{\Lambda^\infty}\Lambda^\infty;\label{majer1}\\
\cat_{G_p}{G_p}&\geq&\cat_{\Omega^\infty}\Omega^\infty.\label{majer2}
\end{eqnarray}
In particular, if $X$ is a connected and non contractible manifold
then
\begin{equation}\label{majer3}
\cat G_p=\cat{\Omega^\infty}=\infty.
\end{equation}
\end{lemma}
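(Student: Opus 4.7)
It suffices to prove (\ref{majer1}); (\ref{majer2}) follows by the identical argument with basepoints fixed, and (\ref{majer3}) will be a consequence of (\ref{majer2}) plus a classical fact on loop spaces. Assuming $\cat_G G = k < \infty$ (otherwise nothing to show), the plan is to produce a continuous map $r:\Lambda^\infty \to G$ together with a continuous homotopy $\eta : \Lambda^\infty \times [0,1] \to \Lambda^\infty$ from $\mathrm{id}_{\Lambda^\infty}$ to $r$. Granted such $(r,\eta)$, the category bookkeeping is immediate: if $F_1,\dots,F_k$ is a closed contractible cover of $G$ with contractions $h_i$ inside $G$, then $\widetilde F_i := r^{-1}(F_i)$ is a closed cover of $\Lambda^\infty$ (because $r(\Lambda^\infty)\subset G = \bigcup F_i$), and each $\widetilde F_i$ is contractible inside $\Lambda^\infty$ by concatenating $\eta|_{\widetilde F_i}$ (which deforms $\gamma$ to $r(\gamma)\in F_i$ through $\Lambda^\infty$) with $h_i\circ r$ (which takes place in $G\subset\Lambda^\infty$). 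One may equivalently invoke Theorem \ref{maincat}(3) with $A=\Lambda^\infty$, $B=r(\Lambda^\infty)$, noting that a $G$-contraction is \emph{a fortiori} a $\Lambda^\infty$-contraction.

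\textbf{Construction of $(r,\eta)$.} The construction uses that $X$ is a smooth submanifold of $\R^n$: such an $X$ admits a tubular neighborhood $U\subset\R^n$ with a smooth retraction $\pi:U\to X$. For $\gamma\in\Lambda^\infty$, let $P_N(\gamma)\in H^1(S^1,\R^n)$ denote the piecewise-linear interpolation of $\gamma$ at equally-spaced nodes of mesh $1/N$. By uniform continuity, choosing $N$ large compared with the modulus of continuity of $\gamma$ guarantees that the whole segment $(1-t)\gamma(s)+tP_N(\gamma)(s)$ lies in $U$ for all $s\in S^1$, $t\in[0,1]$. Setting
\[
r(\gamma):=\pi\circ P_{N(\gamma)}(\gamma),\qquad \eta(\gamma,t):=\pi\bigl((1-t)\gamma + t\,P_{N(\gamma)}(\gamma)\bigr),
\]
one obtains the desired pair: $r(\gamma)$ is piecewise smooth and hence lies in $H^1(S^1,X)\subset G$, using $g\in L^\infty(X,\R^+)$. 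For (\ref{majer2}) one includes $0\in S^1$ as a node; since $\pi(p)=p$, both $r$ and $\eta$ fix the basepoint, yielding the analogous map $\Omega^\infty\to G_p$ homotopic to $\mathrm{id}_{\Omega^\infty}$.

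\textbf{Main obstacle.} The delicate point is globalizing the scale $N$ in a continuous way: since curves in $C^0(S^1,X)$ have arbitrarily bad moduli of continuity and $\Lambda^\infty$ is non-compact, no single $N$ works. One must define $N(\gamma)$ (or equivalently a mollification scale $\eps(\gamma)$) continuously on $\Lambda^\infty$ while simultaneously guaranteeing that the whole convex combination stays in $U$ along the full homotopy. The cleanest route is a partition-of-unity argument on a locally finite cover of $\Lambda^\infty$ by $C^0$-balls on each of which a uniform $N$ works; summing with a subordinate partition of unity yields a continuous $N(\cdot)$ with the required properties. Once this is done, all remaining arguments are formal.

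\textbf{Infinity claim.} Finally, when $X$ is connected and non-contractible, the based loop space $\Omega^\infty X$ has $\cat\Omega^\infty X=\infty$; this is classical, as by Serre's theorem $H^*(\Omega^\infty X;\Q)$ has unbounded cup length. Combined with (\ref{majer2}) this gives $\cat G_p\geq\cat\Omega^\infty=\infty$, which is (\ref{majer3}).
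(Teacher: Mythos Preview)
Your argument is correct, but it takes a substantially longer route than the paper's. What you construct---a continuous map $r:\Lambda^\infty\to H^1(S^1,X)\subset G$ homotopic to the identity, via piecewise-linear interpolation and tubular projection---is precisely the content of the classical homotopy equivalence $\Lambda\simeq\Lambda^\infty$ for smooth $X$ (Klingenberg, \cite[Th.~1.2.10]{Kli78}). The paper instead quotes that equivalence as a black box and exploits the sandwich $\Lambda\subset G\subset\Lambda^\infty$ (the first inclusion using $g\in L^\infty$): one then reads off
\[
\cat_G G \;\geq\; \cat_{\Lambda^\infty} G \;\geq\; \cat_{\Lambda^\infty}\Lambda \;=\; \cat_{\Lambda^\infty}\Lambda^\infty
\]
in one line, the first inequality because any $G$-contraction is \emph{a fortiori} a $\Lambda^\infty$-contraction, the second by monotonicity, and the equality because $\Lambda$ is a deformation retract of $\Lambda^\infty$. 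Your approach has the merit of being self-contained and of making explicit where the boundedness of $g$ enters; the paper's is a three-line argument but delegates the analytic work to the literature. The ``main obstacle'' you flag (a continuous choice of mesh or mollification scale over the non-compact space $\Lambda^\infty$) is real and is exactly the substance of the Klingenberg result; the partition-of-unity remedy you sketch is standard and works. For (\ref{majer3}) you invoke Serre while the paper cites Fadell--Husseini \cite{FH91}; either reference suffices.
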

\begin{proof}
We show only (\ref{majer1}), then (\ref{majer2}) follows in the
same way. Because $X$ is a smooth manifold, it is well known that
there is an homotopic equivalence between $\Lambda^\infty$ and
$\Lambda$ (see, e.g. \cite[Th 1.2.10]{Kli78}). Then
\begin{displaymath}
\cat_{\Lambda^\infty} \Lambda=\cat_{\Lambda^\infty}\Lambda^\infty;
\end{displaymath}
now, because $\Lambda\subset G\subset \Lambda^\infty$, we have
\begin{equation}
\cat_GG\geq\cat_{\Lambda^\infty}\Lambda=\cat_{\Lambda^\infty}\Lambda^\infty,
\end{equation}
that proves (\ref{majer2})

Formula (\ref{majer3}) is a standard result and can be found, for
example in \cite[Corollary 1.2]{FH91}
\end{proof}

By this result we can compute $\cat\Omega$ in some concrete
case, as shown in the next example.
\begin{ex}
Let $M\subset \R^n$ a compact conical manifold, $V$
the set of its vertexes. Suppose that there exists a compact smooth manifold
$X\subset\R^k$ and an homeomorphism $\psi:M\rightarrow X$ s.t.
\begin{displaymath}
\psi_{|_{M\smallsetminus V}}\in C^\infty(M\smallsetminus V,X),
\end{displaymath}
then there exists $g^*$ an induced metric on $X$ defined by
\begin{equation}
g^*_p(v,w):=
\left\{
\begin{array}{ll}
g_{\psi^{-1}(p)}\left(d\psi^{-1}(v),(d\psi^{-1}(w)\right)&
\text{on }X\smallsetminus \psi(V);\\
0&\text{otherwise}.
\end{array}
\right.
\end{equation}
If $|d\psi^{-1}|\in L^\infty(X)$, we have that $g^*$ is bounded
with respect to the Euclidean metric of $X$ and that
\begin{equation}
\Omega_{\psi^{-1}(p)}(M)=G_p(X):=\left\{\gamma\in C^0([0,1],X),
\int_0^1g^*_{\gamma}|\gamma'|^2<\infty\right\}.
\end{equation}
In this case, we can apply lemma \ref{majer} to compute the category of based
(or free) loop space of $M$.
\end{ex}
We also state an immersion theorem that is, in some sense, the converse of
previous example.
\begin{teo}[Nash immersion for conical manifolds]\label{nash}
Let $X$ a smooth manifold and let $g$ a continuous non negative
and bounded bilinear tensor s.t. there exist $V$ a finite set of
points and $g$ is smooth and positive defined on $X\smallsetminus
V$. Then
\begin{enumerate}
\item{If $V=\{x\}$, then, for $N$ sufficiently large,
there exists $M\subset\R^N$ a conical manifold
and a continuous map
\begin{displaymath}
\psi:X\rightarrow M
\end{displaymath}
s.t $\psi_{|_{X\smallsetminus V}}$ is a $C^\infty$ isometry.
}
\item{If $V=\{x_1,\cdots,x_k\}$, for every $x_i$ it exists $\rho_i>0$
s.t. $B(x_i,\rho_i)$ is isometric (in the sense above specified) to some
conical manifold $M_i\subset\R^N$ }.
\end{enumerate}
\end{teo}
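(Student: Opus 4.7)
The plan is to apply the classical Nash embedding theorem to the smooth Riemannian manifold $(X\smallsetminus V, g)$ and then show that the resulting embedding extends continuously over each vertex, collapsing to a single point in $\R^N$.

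For part 1, let $V=\{x\}$. Applying Nash's theorem to the smooth Riemannian manifold $(X\smallsetminus\{x\},\,g|_{X\smallsetminus V})$, we obtain an $N$ and a proper smooth isometric embedding $\psi_0\colon X\smallsetminus\{x\}\to\R^N$. The first key step is to extend $\psi_0$ continuously to all of $X$. Since $g$ is continuous and bounded, in a coordinate chart $U$ around $x$ one has a uniform estimate $g_y(v,v)\leq K|v|_{\mathrm{eucl}}^2$, so any curve lying in $U$ has $g$-length at most $\sqrt{K}$ times its Euclidean length. Consequently, for any sequence $y_n\to x$ in $X$ the intrinsic distances $d_g(y_n,y_m)$ tend to zero: connect $y_n$ and $y_m$ by a short Euclidean segment inside the chart. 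Because $\psi_0$ is an isometry, $\|\psi_0(y_n)-\psi_0(y_m)\|\leq d_g(y_n,y_m)$, so $(\psi_0(y_n))$ is Cauchy in $\R^N$ and converges. A standard diagonal argument shows the limit is independent of the chosen sequence, and we define $\psi(x)$ to be this common limit, obtaining a continuous map $\psi\colon X\to\R^N$.

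Next we verify that $\psi$ is injective and a homeomorphism onto its image $M=\psi(X)$. Injectivity on $X\smallsetminus\{x\}$ is inherited from $\psi_0$; injectivity at $x$ uses properness of $\psi_0$: for $y_n\to x$ the images $\psi_0(y_n)$ must leave every compact subset of $\psi_0(X\smallsetminus\{x\})$, so the limit $\psi(x)$ cannot equal any $\psi_0(y)$ with $y\neq x$. Once injectivity and continuity are established, $M$ is a topological submanifold of $\R^N$ (a homeomorphic image of the topological manifold $X$) that coincides with the smooth submanifold $\psi_0(X\smallsetminus\{x\})$ off the single point $\psi(x)$. Thus $M$ is a conical manifold with vertex $\psi(x)$, and $\psi|_{X\smallsetminus V}$ is a smooth isometry by construction.

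Part 2 is the local version: for each vertex $x_i$, choose $\rho_i>0$ small enough that $\overline{B(x_i,\rho_i)}$ is compact and disjoint from $V\smallsetminus\{x_i\}$, then apply part 1 to $B(x_i,\rho_i)$, viewed as a smooth manifold carrying a metric with a single degenerate point. The main obstacle throughout is the continuous-extension step: it depends essentially on the boundedness hypothesis on $g$ (without which short Euclidean arcs near a vertex could have arbitrarily large $g$-length, destroying the Cauchy argument) and on arranging the Nash embedding to be proper, so that no two distinct points of $X$ get identified in the limit.
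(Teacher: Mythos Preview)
Your argument is the paper's: apply Nash to $(X\smallsetminus V,g)$ and use boundedness of $g$ to extend the embedding continuously across the vertex by a Cauchy-sequence argument; part~2 then reduces to part~1 on disjoint balls. You are actually more careful than the paper, which never verifies that the limit is independent of the approximating sequence or that the extended map is injective.

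One point to clean up: you ask for a \emph{proper} Nash embedding $\psi_0\colon X\smallsetminus\{x\}\to\R^N$ and later cite properness for the injectivity step. But $(X\smallsetminus\{x\},g)$ is incomplete---the $g$-distance to $x$ is finite by the very boundedness you exploit---and an incomplete Riemannian manifold admits no proper isometric embedding into Euclidean space: the image would then be closed, so your Cauchy sequence $\psi_0(y_n)$ would converge to a point \emph{inside} the image, the opposite of what you want. Fortunately your injectivity argument only uses that $\psi_0$ is a topological embedding (so that $\psi_0(y_n)\to\psi_0(z)$ in $\R^N$ forces $y_n\to z$ in $X\smallsetminus\{x\}$), and Nash does guarantee this; simply drop the word ``proper'' throughout.
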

\begin{proof}
We start proving 1. By hypothesis, $(X\smallsetminus V,g)$ is a
Riemannian manifold, so, by Nash theorem \cite{Na56}, it can be
embedded in $\R^N$, for $N$ sufficiently large. Let
$\psi:X\smallsetminus V\rightarrow M$ be this embedding.

We can continuously extend $\psi$ to the whole $X$. In fact, let
$\{x_n\}_n$ be a Cauchy sequence converging to $x$; because $g$ is
bounded, then $\{\psi(x_n)\}_n$ is a Cauchy sequence in $\R^N$, so
there exists $y\in\R^N$ s.t. $\lim\psi(x_n)=y$. Set $\psi(x):=y$:
obviously we have that
\begin{equation}
\psi(B_X(x,\rho))\subset B_{\R^N}(y,r),
\end{equation}
and $r\stackrel{\rho\rightarrow0}{\longrightarrow}0$, so $\psi$ is
continuous at $x$.

Then, set $M:=\psi(X)$, we have that $M$ is a conical manifold
with vertex $y$, isometric to $X$.

To proof 2, it is sufficient to choose $\rho_i$ s.t.
$B(x_i,\rho_i)$ are all disjoint. Then we apply the previous
result with $X=B(x_i,\rho_i)$.
\end{proof}
By this result, we formulate a result which will be useful in the next of this paper.
\begin{teo}
In the above hypothesis, we have that
\begin{displaymath}
\text{number of geodesics in }X\geq\cat G
\end{displaymath}
\end{teo}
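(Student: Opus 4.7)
The plan is to transfer the problem from $X$ (a smooth manifold with a degenerate metric) to a genuine conical manifold via Theorem \ref{nash}, and then invoke Corollary \ref{catmaincor}. First I would apply Theorem \ref{nash} to obtain $M \subset \R^N$ a conical manifold with vertex set $\psi(V)$ and a continuous map $\psi : X \to M$ whose restriction to $X \smallsetminus V$ is a smooth isometry onto $M \smallsetminus \psi(V)$. Because $\psi$ is continuous and bijective (its inverse is the smooth isometry composed with the inclusion, extended continuously by Theorem \ref{nash}), it is a homeomorphism.

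Next I would establish that $\psi$ induces a bijection $\psi_* : G \to \Omega(M)$ between finite-energy curves, preserving the energy functional. For $\gamma \in G$ one has $\psi \circ \gamma \in C^0$, and since $\psi$ is an isometry on $X \smallsetminus V$,
\begin{displaymath}
\int_0^1 |(\psi\circ\gamma)'|^2\,ds = \int_0^1 g_{\gamma(s)}(\gamma'(s),\gamma'(s))\,ds = E(\gamma),
\end{displaymath}
so $E(\gamma)$ finite means $\psi\circ\gamma \in H^1$, i.e.\ $\psi\circ\gamma\in\Omega(M)$; the reverse direction is analogous because $\psi^{-1}$ is smooth and isometric off the vertex. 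Moreover $\psi_*$ is continuous in $H^1$-topology because of the isometric identification of energies, so it is in fact a homeomorphism of path spaces. In particular $\cat G = \cat \Omega(M)$.

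Finally I would observe that the notion of geodesic (Definition \ref{defgeod}) pulls back under $\psi_*$: a path $\tilde\gamma = \psi\circ\gamma$ satisfies $D_s\tilde\gamma' = 0$ on the open complement of $\tilde\gamma^{-1}(\psi(V))$ precisely when the corresponding condition in the metric $g$ holds on the complement of $\gamma^{-1}(V)$, and the constant-speed condition transfers identically. Hence geodesics on $X$ (in the natural sense induced by the embedding) are in bijection with geodesics on $M$. Applying Corollary \ref{catmaincor} on $M$ gives
\begin{displaymath}
\#\{\text{geodesics in } X\} = \#\{\text{geodesics in } M\} \geq \cat \Omega(M) = \cat G,
\end{displaymath}
which is the desired inequality.

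The main obstacle, and the step requiring some care, is verifying that $\psi_*$ is actually a homeomorphism of the appropriate path spaces (so that $\cat G = \cat \Omega(M)$) rather than merely a bijection; once one checks that $H^1$-convergence in $\Omega(M)$ pulls back to convergence in $G$ via the uniform control provided by boundedness of $g$ away from $V$ together with the continuous extension at the vertex supplied by Theorem \ref{nash}, everything else follows mechanically from the already-established Corollary \ref{catmaincor}.
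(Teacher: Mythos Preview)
Your approach coincides with the paper's for the single-vertex case: both invoke Theorem~\ref{nash} to pass to a conical manifold $M$, identify $G$ with $\Omega(M)$ via the isometry, and then apply Corollary~\ref{catmaincor}.

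However, there is a genuine gap in your treatment of the multiple-vertex case. You write that Theorem~\ref{nash} yields ``a conical manifold $M$ with vertex set $\psi(V)$'' and a global continuous map $\psi:X\to M$. But Theorem~\ref{nash} asserts a global embedding only when $V$ consists of a single point (case~1); when $V=\{x_1,\ldots,x_k\}$ with $k>1$, case~2 provides only \emph{local} isometries of small balls $B(x_i,\rho_i)$ onto conical pieces $M_i\subset\R^N$, not a single global $\psi$. So the identification $G\cong\Omega(M)$ on which your argument rests is not available from the results of the paper.

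The paper handles the multiple-vertex case differently: rather than transferring everything to a single conical manifold, it uses the local isometries from Theorem~\ref{nash}(2) to transport the deformation-lemma machinery (Theorems~\ref{maindeflemma} and~\ref{deflemma2}) directly to $X$, and then reruns the Lusternik--Schnirelmann argument of Theorem~\ref{numpunticrit} on $X$ itself. Your reduction to Corollary~\ref{catmaincor} is cleaner when it applies, but it would require a global Nash-type embedding producing a conical manifold with several vertices, which Theorem~\ref{nash} as stated does not supply.
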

\begin{proof}
If $X$ has an unique vertex, it is isometric to a conical manifold $M$.
Then, by applying lemma \ref{majer}, we obtain the proof.
If the manifold $X$ has several vertexes, we are in the case 2
of previous theorem.

Anyway, by the local isometries, we can prove an analogous of deformation lemma for
geodesics in $X$. Also an analogous of theorem \ref{numpunticrit} follows. This, paired with lemma
\ref{majer} gives us the proof.
\end{proof}
\subsection{Brachistochrones}\label{sezbrac}
In this section we want to study the brachistocrones problem. A
brachistochrone is a curve $\gamma$ which minimizes the time of
transit for a particle moving from a point $p$
towards a point $q$. We study this problem on $(S^n,<,>)$ an
Euclidean sphere embedded in $\R^{n+1}$. We suppose that the particle
moves in the presence of a potential $U:S^n\rightarrow\R$ without friction. Also, we are interested
to any curve stationary for the time of transit functional.

Be $p,q\in S^n$, $E\in\R^+$ the energy of the particle, $U\in
C^\infty(S^n,\R)$ the given potential. It is well known that, if
there exist $c_1,c_2\in\R$ s.t.
\begin{equation}
-\infty<c_1<U(\cdot)<c_2<E,
\end{equation}
then this problem is equivalent to the geodesic problem for the
Riemannian manifold
\begin{displaymath}
\left(
S^n,g:=g_x=\frac{<,>}{E-U(x)}
\right),
\end{displaymath}
and that the metric $g$ is equivalent to the Euclidean metric on $S^n$, so
the problem has always a solution.
Furthermore, it is also well known that a solution exists even if
the upper bound on $U(x)$ does not exists.

In this section we want to study the
problem for a given potential
\begin{displaymath}
U\in C^\infty(S^n\smallsetminus V,\R)
\end{displaymath}
where $V=\{x_1,\cdots x_k\}$ a finite set of points on the sphere, and
\begin{displaymath}
U(x)\stackrel{x\rightarrow x_i}{\longrightarrow}-\infty
\end{displaymath}
As we will see in the next section, potential in $S^n$ with these kind of singularities
may appear from non singular potential defined in $\R^n$.

For the sake of simplicity, we suppose that there exist $c>0$ for which $E>c>U(\cdot)$.

We define a metric on $S^n$ by
\begin{equation}
g:=g_x=
\left\{
\begin{array}{ll}
\frac{<,>}{E-U(x)}&\text{on }S^n\smallsetminus V;\\
0&\text{otherwise},
\end{array}
\right.
\end{equation}
and we look for $g_x$-geodesics between two given points $p,q\in
S^n$. Set, as usual
\begin{eqnarray*}
G(p)&=&\left\{\gamma\in C^0([0,1],S^n),\;\gamma(0)=\gamma(1)=p,
\frac{1}{2}\int_0^1g(\gamma',\gamma')<\infty
\right\};\\
G(p,q)&=&\left\{\gamma\in C^0([0,1],S^n),\;\gamma(0)=p,\gamma(1)=q,
\frac{1}{2}\int_0^1g(\gamma',\gamma')<\infty
\right\},
\end{eqnarray*}
we know that $g$ satisfies the hypothesis of lemma \ref{majer}, so
\begin{equation}
\cat G(p)=\infty.
\end{equation}
It's easy to prove that there is an homotopy equivalence between
$G(p)$ and $G(p,q)$, in fact, for any given couple of points $p,q$,
there exists a continuous curve $\gamma$ which joins them, with
$E(\gamma)<\infty$ (because $g$ is bounded). Then there is a map
\begin{eqnarray*}
i&:&G(p)\rightarrow G(p,q);\\
&&\beta\mapsto\beta+\gamma,
\end{eqnarray*}
where $\beta+\gamma$ is the usual composition of paths.

Of course there exists the inverse map
\begin{eqnarray*}
i^{-1}&:&G(p,q)\rightarrow G(p);\\
&&\beta\mapsto\beta +(-\gamma).
\end{eqnarray*}
and $i^{-1}\circ i$ is homotopic equivalent to $1_{G(p)}$.

By the above consideration and by Nash theorem we have that
\begin{displaymath}
\infty=\cat G(p)=\cat G(p,q)=\text{number of geodesics between }p\text{ and }q,
\end{displaymath}
thus we can count the number of brachistochrones on the sphere in presence of
our potential $U$.
\subsubsection{Brachistocrones in $\R^n$}
A more interesting application is the study of the same brachistochrone
problem in $\R^n$ (indeed this was the very beginning of our research).
Let $U\in C^\infty(\R^n,\R)$ and $E>0$ s.t.
\begin{itemize}
\item{$E>U(x)$,}
\item{$-U(x)=O(|x|^\alpha)$ when $|x|>>1$, for
some $\alpha>0$.}
\end{itemize} We are looking for brachistocrones
joining two given points $p,q\in R^n$ in presence of potential
$U(x)$. As above we look for geodesics in
\begin{equation}
\left(
R^n,g_x:=\frac{<,>}{E-U(x)}
\right),
\end{equation}
where $\frac{1}{E-U(x)}\in C^\infty(\R^n,\R\smallsetminus\{0\})
\cap L^\infty(\R^n)$.

We can map $\R^n$ in $S^n\subset\R^n+1$ by the stereographic map $\pi$.
The inverse map is
\begin{equation}
\pi^{-1}:
\left\{
\begin{array}{ccc}
S^n\smallsetminus N\subset\R^{n+1} &\rightarrow& \R^n\\
\left(
\begin{array}{c}
y_1\\
\cdots\\
y_{n+1}
\end{array}
\right)
&\mapsto&
\left(
\begin{array}{c}
x_1\\
\cdots\\
x_{n}
\end{array}
\right)
=
\left(
\begin{array}{c}
\frac{y_1}{1-y_{n+1}}\\
\cdots\\
\frac{y_{n}}{1-y_{n+1}}
\end{array}
\right)
\end{array}
\right.,
\end{equation}
where $N$ is the north pole of $S^n$. As usual we can induce a metric $g^*$
on $S^n$ defined by
\begin{equation}
g^*(y)(v,w)=
\left\{
\begin{array}{cl}
g_{\pi^{-1}(y)}
\left(
d\pi^{-1}v,d\pi^{-1}(w)
\right)
&\text{on }S^n\smallsetminus N,\\
0&y=N.
\end{array}
\right.
\end{equation}
It'easy to see that
\begin{equation}
|d\pi^{-1}|=O\left(\frac{1}{\sqrt{1-y_{n+1}}}\right),
\end{equation}
that, read on $\R^n$, becomes
\begin{equation}
|d\pi^{-1}|=O\left(\frac{1}{|x|}\right).
\end{equation}
So, if $\alpha>2$, then $g^*$ is bounded with respect to the
Euclidean metric on $S^n$, and we can apply lemma \ref{majer}.

Furthermore, by Nash embedding, there is an isometry with a
compact conical manifold, so we can easily state that there is an
infinite number of brachistocrones joining $p$ and $q$, although
we cannot say if they are bounded in $\R^n$, and so physical
meaningful.

\appendix

\section{The theoretic frame}

As said, our deformation lemmas
(Lemma \ref{maindeflemma} and Lemma \ref{deflemma2})
are obtained modifying a weak slope theory resutlt. In this section
we present the {\em k-slope},
a generalization of the weak slope, which allows us to reformulate
the main results of this paper in a more general framework.

We start recalling the definition of weak slope.
\begin{defin}
Let $(X,d)$ be a metric space and let $f:X\rightarrow\R$ be a
continuous functional. The {\em weak slope} of $f$ at $u\in
X$ (noted $|df|(u)$) is the supremum of $\sigma$'s in
$[0,+\infty)$ s.t. $\exists\delta>0$ and
$\h:B(u,\delta)\times[0,\delta]\rightarrow X$ continuous with
\begin{equation}\label{cap2ws1}
d(\h(v,t),v)\leq t
\end{equation}
\begin{equation}\label{cap2ws2}
f(\h(v,t))-f(v)\leq-\sigma t
\end{equation}
for every $v\in B(u,\delta)$, $t\in [0,\delta]$.
\end{defin}

Due to (\ref{cap2ws1}) we can prove a deformation property for
continuous functio\-nals (\cite[theorem 2.8]{CDM93}): this
inequality allows us to compound the local maps $\h$ finding a global
retraction.

Unhappily, these tools are not completely useful for our purposes.
In particular we was not able to prove an estimate like
(\ref{cap2ws1}).
In our work we override these difficulties using the compactness
of sets $\Sigma_i$ and compounding explicitly all the local retractions.
This method has a generalization that we present here.

\subsection{The $k$-slope}

We define an extension
of weak slope which will be called {\em $k$-slope}.

\begin{defin}\label{cap2ksdef}
Let $(X,d)$ be a metric space. Let $f:X\rightarrow \R$ be a continuous
functional and  let $u\in X$. We define the {\em k-slope} of $f$
at $u\in X$ (noted $|d_kf|(u)$) as the supremum of
$\sigma\in[0,\infty)$ s.t. exist $\delta>0$,
$k_u:[0,\delta]\rightarrow\R^+$ continuous, $k_u(0)=0$, and a
continuous map
$$
\h:B(u,\delta)\times[0,\delta]\rightarrow X
$$
which satisfies
\begin{eqnarray}
d(\h(v,t),v) &\leq&k_u(t)\label{cap2ksdef1}\\
f(\h(v,t))-f(v)&\leq&-\sigma t\label{cap2ksdef2}
\end{eqnarray}
for all $v\in B(u,\delta)$, for all $t\in[0,\delta]$
\end{defin}

In analogy with the weak slope theory we can prove the following
property.
\begin{prop}\label{cap2sci}
If $f$ is continuous, $|d_kf|$ is lower semi-continuous.
\end{prop}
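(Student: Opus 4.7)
The goal is to show that if $\sigma < |d_kf|(u)$, then $|d_kf|(v) \geq \sigma$ for every $v$ in some neighborhood of $u$. The plan is to mimic the standard argument used for the weak slope, exploiting the fact that all the data in Definition \ref{cap2ksdef} are local and can simply be restricted.

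First I would fix $u \in X$ and $\sigma < |d_kf|(u)$. By definition of the supremum there exists $\sigma' \in (\sigma, |d_kf|(u)]$ for which one can produce $\delta > 0$, a continuous function $k_u \colon [0,\delta] \to \R^+$ with $k_u(0) = 0$, and a continuous map
\[
\h \colon B(u,\delta) \times [0,\delta] \longrightarrow X
\]
satisfying (\ref{cap2ksdef1}) and (\ref{cap2ksdef2}) with $\sigma$ replaced by $\sigma'$.

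Next I would show that the very same $\h$, suitably restricted, witnesses $|d_kf|(w) \geq \sigma'$ for every $w \in B(u,\delta/2)$. Concretely, set $\delta_w := \delta/2$, $k_w := k_u|_{[0,\delta_w]}$ and $\h_w := \h|_{B(w,\delta_w)\times [0,\delta_w]}$; the inclusion $B(w,\delta_w) \subset B(u,\delta)$ (which follows from the triangle inequality) ensures $\h_w$ is well-defined, and the two required inequalities for $\h_w$ are immediate restrictions of those for $\h$. Hence $|d_kf|(w) \geq \sigma' > \sigma$ on $B(u,\delta/2)$, which is exactly the lower semi-continuity at $u$.

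There is no real obstacle here: the proof is essentially a bookkeeping argument, since neither condition (\ref{cap2ksdef1}) nor condition (\ref{cap2ksdef2}) involves $u$ explicitly — they only refer to points of $B(u,\delta)$, which contains a whole ball around every sufficiently close $w$. The only subtlety worth noting (and which the paper's $k$-slope formulation handles transparently) is that we do not need any uniformity of $k_u$ as $u$ varies: each point of $X$ may come with its own modulus, and the argument above shows that a single modulus suffices locally.
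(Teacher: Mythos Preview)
Your argument is correct and essentially identical to the paper's: both restrict the map $\h$ from $B(u,\delta)\times[0,\delta]$ to $B(w,\delta/2)\times[0,\delta/2]$ for $w$ close to $u$, using the same $k_u$. The paper phrases lower semicontinuity sequentially and works directly with $\sigma$ rather than introducing an intermediate $\sigma'$, but the content is the same.
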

\begin{proof}
If $|d_kf|(u)=0$ the proof is obvious. Otherwise, for any
$0<\sigma<|d_kf|(u)$ there exist $\delta$ and
$\h:B(u,\delta)\times[0,\delta]\rightarrow X$ as in definition
\ref{cap2ksdef}. Let $u_h\rightarrow u$. Definitively we have $u_h\in
B(u,\frac{\delta}{2})$, so we can take the restriction of $\h$ to
$B(u_h,\frac{\delta}{2})\times[0,\frac{\delta}{2}]$, to have
$|d_kf(u_h)|\geq \sigma$. This completes the proof.
\end{proof}
Obviously we say that $u\in X$ is a {\em critical point} if
$|d_kf|(u)=0$.
\subsection{The deformation lemma}

We are able now to formulate the wanted deformation property.
\begin{teo}\label{cap2ksmain}
Let $(X,d)$ be a metric space, and $f:X\rightarrow \R$ a conti\-nuous
functional. Suppose that exists $\sigma\in\R^+$ s.t.
$|d_kf|(u)\geq \sigma$ for all $u\in X$. Let $C\subset X$ be a compact
subspace such that
\begin{equation}\label{cap2kustima}
k_u(t)\leq t \hbox{ }\hbox{ } \forall u\in X\smallsetminus C.
\end{equation}

Then it exists a $\tau\in \R^+$ and a continuous function
$\mu:X\times[0,\tau]\rightarrow X$ s.t.
\begin{equation}
\mu(u,0)=u  \hbox{ }\hbox{ } \forall u\in X,
\end{equation}
\begin{equation}
f(\mu(u,t))-f(u)\leq-\sigma t \hbox{ }\hbox{ } \forall u\in X,t\in
[0,\tau].
\end{equation}
\end{teo}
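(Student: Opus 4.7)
The plan is to decompose the problem into two regions: a neighborhood of the compact set $C$, where only the full $k$-slope estimate is available, and its complement, where the classical weak slope condition $k_u(t)\leq t$ holds. On each region a deformation is built by a different technique, and the two are glued by a partition of unity, exactly in the spirit of Lemmas \ref{retrsigma} and \ref{retru}.

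Near $C$: for each $u\in X$, the hypothesis $|d_kf|(u)\geq\sigma$ yields $\delta_u>0$, a continuous modulus $k_u$ with $k_u(0)=0$, and a map $\h_u:B(u,\delta_u)\times[0,\delta_u]\to X$ satisfying $d(\h_u(v,t),v)\leq k_u(t)$ and $f(\h_u(v,t))-f(v)\leq-\sigma t$. Since $C$ is compact, finitely many balls $B(u_i,\delta_i/4)$, $i=1,\dots,N$, cover it. On a partition of unity $\{\vartheta_i\}$ subordinate to this cover I define inductively, following step III of the proof of Lemma \ref{retrsigma},
\[
\eta_1(\beta,t)=\h_{u_1}(\beta,\vartheta_1(\beta)t),\qquad
\eta_h(\beta,t)=\h_{u_h}(\eta_{h-1}(\beta,t),\vartheta_h(\beta)t),
\]
extending by the identity outside the relevant $\overline{B}_h$. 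The inductive invariant $\eta_{h-1}(\overline{B}_i,t)\subset B(u_i,(1+\eps)^{h-1}\delta_i/4)$ is the crucial point: from $d(\h_{u_h}(v,t),v)\leq k_{u_h}(t)$ one gets $d(\h_{u_h}(\eta_{h-1}(\beta,t)),u_i)\leq(1+\eps)^{h-1}\delta_i/4+k_{u_h}(t)$, and continuity of $k_{u_h}$ at $0$ allows one to shrink $\bar t$ so that $k_{u_h}(\bar t)\leq\eps(1+\eps)^{h-1}\delta_i/4$. Choosing $\eps$ with $(1+\eps)^N<2$ and setting $\eta_R=\eta_N$ gives a continuous $\eta_R:R\times[0,\bar t]\to X$ on a neighborhood $R$ of $C$, with $f(\eta_R(\beta,t))-f(\beta)\leq-\sigma t$.

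Outside $C$: where $k_u(t)\leq t$ each $\h_u$ satisfies the original weak slope definition, so $|df|(u)\geq\sigma$ on $X\setminus C$, and the classical \cite[Theorem 2.8]{CDM93} produces a continuous $\eta_W:(X\setminus C'')\times[0,\bar t']\to X$ with $f(\eta_W(u,t))-f(u)\leq-\sigma t$, where $C''$ is a compact neighborhood of $C$ inside $R$. To glue I pick a partition of unity $\{\alpha_1,\alpha_2\}$ with $\alpha_1$ supported in $R$ and equal to $1$ on $C''$, and set
\[
\mu(u,t)=\eta_R(\tilde\eta_W(u,\alpha_2(u)t),\alpha_1(u)t),
\]
where $\tilde\eta_W=\eta_W$ on $X\setminus C''$ and is the identity on $C''$; after shrinking $\tau$ one ensures $\tilde\eta_W(u,\alpha_2(u)t)\in R$ whenever $\alpha_1(u)>0$. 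Continuity, $\mu(u,0)=u$, and the decrease $f(\mu(u,t))-f(u)\leq-\sigma(\alpha_1+\alpha_2)t=-\sigma t$ all follow. The main difficulty is the inductive composition near $C$: the Lipschitz-type estimate (\ref{dprimo}) used in Lemma \ref{retrsigma} is no longer available, but the uniform-in-$v$ bound $d(\h_u(v,t),v)\leq k_u(t)$ together with $k_u(0)=0$ serves as a replacement, since the per-iteration expansion is additive in $k_u(\bar t)$ and can be absorbed into the geometric factor $(1+\eps)$ by taking $\bar t$ small. Once this substitution is made, the arguments of Lemmas \ref{retrsigma} and \ref{retru} carry over almost unchanged.
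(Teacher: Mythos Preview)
Your proposal is correct and follows essentially the same architecture as the paper's proof: split into a deformation near $C$ built by finite inductive composition of the local maps $\h_u$ (your analogue of Lemma~\ref{cap2defC}), a deformation on $X\smallsetminus C$ obtained from the classical weak-slope theorem of \cite{CDM93} (Lemma~\ref{cap2defX-C}), and a final gluing by a cut-off. The only cosmetic difference is that the paper controls the inductive step via Milnor's lemma and the additive bound $d(\eta_{h-1}(v,t),v)\leq\sum_{j,\lambda}k(\vartheta_{j,\lambda}(v)t)$, whereas you import the geometric-factor bookkeeping $(1+\eps)^{h-1}$ from Lemma~\ref{retrsigma}; both devices exploit the same fact, namely that finitely many moduli $k_{u_i}$ with $k_{u_i}(0)=0$ can be made uniformly small on $[0,\bar t]$.
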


Before proving \ref{cap2ksmain}, we prove two deformation lemmas for $C$ and
$X\smallsetminus C$ analogues to lemma \ref{retrsigma} and
lemma \ref{retru}. To conclude the proof we will attach the
retractions found.

We recall a topological lemma by John Milnor useful for the next results.

\begin{lemma}[Milnor's lemma]
Let $\{U_\alpha\}_{\alpha\in A}$ be an open cover of a paracompact space $X$.
There is a locally finite open cover $V_{j,\lambda}$ refining
$\{U_\alpha\}$ s.t. $V_{j,\lambda}\cap V_{j,\mu}=\emptyset$
if $\lambda\neq\mu$.
\end{lemma}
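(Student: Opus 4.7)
The plan is to carry out Milnor's classical ``partition of unity separates indices by level'' argument. Using paracompactness of $X$ and shrinking if necessary, I would first fix a locally finite partition of unity $\{\phi_\alpha\}_{\alpha\in A}$ subordinate to the given cover, so that $\operatorname{supp}\phi_\alpha\subset U_\alpha$ for every $\alpha$. This is the only ingredient coming from paracompactness; after that the argument is purely combinatorial.

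Next, for each positive integer $j$ and each finite subset $S\subset A$ with $|S|=j$, define
$$V_S:=\Bigl\{x\in X : \min_{\alpha\in S}\phi_\alpha(x)>\max_{\beta\in A\setminus S}\phi_\beta(x)\Bigr\},$$
and set $V_{j,\lambda}:=V_S$ where $\lambda$ ranges over the $j$-element subsets of $A$. I would then verify four things. (i) \emph{Openness of $V_S$}: at each point only finitely many $\phi_\alpha$ are nonzero, so on a small neighbourhood the min and the max above reduce to the min and max of a finite family of continuous functions, hence $V_S$ is open. (ii) \emph{Refinement}: if $x\in V_S$ and $\alpha\in S$ then $\phi_\alpha(x)>0$, so $x\in\operatorname{supp}\phi_\alpha\subset U_\alpha$, and $V_S\subset U_\alpha$ for every $\alpha\in S$. (iii) \emph{Disjointness at fixed $j$}: given $S\neq S'$ with $|S|=|S'|=j$, choose $\alpha\in S\setminus S'$ and $\beta\in S'\setminus S$; any $x\in V_S\cap V_{S'}$ would satisfy simultaneously $\phi_\alpha(x)>\phi_\beta(x)$ and $\phi_\beta(x)>\phi_\alpha(x)$, which is impossible. (iv) \emph{Cover}: for $x\in X$ put $S(x):=\{\alpha:\phi_\alpha(x)>0\}$, a finite set by local finiteness of the $\phi_\alpha$; then $\min_{\alpha\in S(x)}\phi_\alpha(x)>0=\max_{\beta\in A\setminus S(x)}\phi_\beta(x)$, so $x\in V_{S(x)}$.

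Finally, I would establish local finiteness. Given $x\in X$, pick a neighbourhood $W$ of $x$ meeting only finitely many supports, say $\operatorname{supp}\phi_{\alpha_1},\dots,\operatorname{supp}\phi_{\alpha_n}$. If $V_S\cap W\neq\emptyset$ then at any point of the intersection every $\alpha\in S$ satisfies $\phi_\alpha>0$, which forces $S\subset\{\alpha_1,\dots,\alpha_n\}$. There are at most $2^n$ such subsets $S$, so only finitely many $V_{j,\lambda}$ can meet $W$.

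The main obstacle, such as it is, lies in step (i) and in the covering verification (iv): both rely crucially on the local finiteness of the partition of unity, which is where paracompactness genuinely enters. The disjointness statement, which is the whole point of the lemma and what makes it useful for compounding local deformations as in Lemma~\ref{retrsigma}, is then almost automatic once the definition of $V_S$ via strict inequality is written down.
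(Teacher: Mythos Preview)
Your argument is correct and is essentially the same as the paper's: the paper also fixes a locally finite partition of unity $\{\varphi_\alpha\}$ subordinate to the cover and, for each finite index set $\lambda=\{\alpha_0,\dots,\alpha_j\}$, defines $V_{j,\lambda}=\{x:\varphi_\alpha(x)>0\text{ for }\alpha\in\lambda,\ \varphi_\gamma(x)<\varphi_\alpha(x)\text{ for }\alpha\in\lambda,\gamma\notin\lambda\}$, which is exactly your condition $\min_{\alpha\in S}\phi_\alpha>\max_{\beta\notin S}\phi_\beta$ rewritten (the extra positivity clause in the paper is redundant). The paper merely records the construction and cites Palais for the verifications, so your proof is in fact more complete than what appears there.
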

\begin{proof}
For the proof we refer to \cite[Lemma 2.4]{Pal66b}. Here we report only how to
construct the open cover
$\{V_{i,\lambda}\}_{i,\lambda}$.

By an initial refinement
we can take  $\{U_\alpha\}$ locally finite. Then, let $\Lambda_j$
be the set of $(j+1)$-ples
$\lambda=\{\alpha_0,\dots,\alpha_j\}$ of elements in $A$.
Let $\{\varphi_\alpha\}_\alpha$
be a partition of unity with $\text{spt}\varphi_\alpha\subset U_\alpha$; for
$\lambda \in \Lambda_j$ let
\begin{displaymath}
V_{j,\lambda}=\left\{
x\in X\;|\;\varphi_\alpha>0 \text{ if }\alpha\in\lambda\text{ and }
\varphi_\gamma<\varphi_\alpha \text{ if }\alpha\in\lambda\,,
\gamma\notin\lambda
\right\},
\end{displaymath}
so we have found our locally finite open cover $V_{j,\lambda}$.
\end{proof}

With this lemma, we prove the deformation results.

\begin{lemma}[deformation lemma for $C$]\label{cap2defC}
Let $(X,d)$ be a metric space, and $C\subset X$ be a compact set. Let
$\sigma\in\R^+$ and let $f:X\rightarrow \R$ be a conti\-nuous function
s.t.
\begin{equation}
|d_kf|(u)>\sigma \,\,\,\forall u\in C.
\end{equation}
Then there exist $\widetilde{C}\supset C$, $\tau\in\R^+$ and
$\eta:\widetilde{C}\times[0,\tau]\rightarrow X$ a continuous
functional such that:
\begin{itemize}
\item $\eta(u,0)=u$ for all $u\in\widetilde{C} $; \item
{$f(\eta(u,t))-f(u)\leq -\sigma t$ for all $u\in\widetilde{C}$,
$t\in[0,\tau]$}.
\end{itemize}
\end{lemma}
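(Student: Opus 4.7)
The plan is to localize via the hypothesis $|d_kf|(u) > \sigma$ at every $u \in C$, obtain local deformations, and glue them using Milnor's lemma, closely mirroring Step III of the proof of Lemma \ref{retrsigma}.

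First, for every $u \in C$ the definition of $k$-slope supplies $\delta_u > 0$, a continuous $k_u \colon [0, \delta_u] \to \R^+$ with $k_u(0) = 0$, and a continuous map $\h_u \colon B(u, \delta_u) \times [0, \delta_u] \to X$ satisfying \eqref{cap2ksdef1} and \eqref{cap2ksdef2} with constant $\sigma$. By compactness of $C$, the cover $\{B(u, \delta_u/2)\}_{u\in C}$ admits a finite subcover $\{B(u_i, \delta_i/2)\}_{i=1}^N$; set $\widetilde{C} := \bigcup_i B(u_i, \delta_i/4)$, an open neighborhood of $C$. I then apply Milnor's lemma to the finite cover $\{B(u_i, \delta_i/2)\}$ of $\widetilde{C}$ to obtain a locally finite refinement $\{V_{j, \lambda}\}_{0 \leq j \leq N-1,\, \lambda \in \Lambda_j}$ with $V_{j, \lambda} \cap V_{j, \mu} = \emptyset$ when $\lambda \neq \mu$, each $V_{j,\lambda}$ contained in some $B(u_{i(j,\lambda)}, \delta_{i(j,\lambda)}/2)$. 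Choose a subordinate partition of unity $\{\psi_{j,\lambda}\}$ and set $\psi_j := \sum_\lambda \psi_{j,\lambda}$, so $\sum_j \psi_j \equiv 1$ on $\widetilde{C}$.

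With these ingredients, define $\eta^{(0)}(u,t) := u$ and iteratively, for $0 \leq j \leq N-1$,
$$
\eta^{(j+1)}(u,t) := \begin{cases} \h_{u_{i(j+1,\lambda)}}\bigl(\eta^{(j)}(u,t),\, \psi_{j+1}(u)\, t\bigr), & u \in V_{j+1,\lambda}, \\ \eta^{(j)}(u,t), & u \notin \bigcup_\lambda V_{j+1,\lambda}, \end{cases}
$$
which is unambiguous thanks to the disjointness within each level, and continuous since each $\psi_{j+1,\lambda}$ vanishes on the boundary of $V_{j+1,\lambda}$. Setting $\eta := \eta^{(N)}$ yields $\eta(\cdot,0) = \mathrm{Id}$ trivially, while the energy estimate telescopes: each step contributes $-\sigma\, \psi_{j+1}(u)\, t$ by \eqref{cap2ksdef2}, so $f(\eta(u,t)) - f(u) \leq -\sigma t \sum_j \psi_j(u) = -\sigma t$.

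The main obstacle is to ensure that each intermediate point $\eta^{(j)}(u,t)$ still lies in the domain $B(u_{i(j+1,\lambda)}, \delta_{i(j+1,\lambda)})$ of the $\h$ being applied at the next step; this is the analogue of Step II in Lemma \ref{retrsigma}. I would control $d(\eta^{(j)}(u,t), u)$ inductively via the bounds \eqref{cap2ksdef1}, and since the finitely many $k_{u_i}$ are continuous with $k_{u_i}(0) = 0$, I can pick $\tau \in (0, \min_i \delta_i)$ small enough, uniformly in $u \in \widetilde{C}$ and $j$, so that every iterate stays within the required ball. The finiteness of the subcover (from compactness of $C$) is exactly what makes this uniformity achievable; no analogue of the condition $k_u(t) \leq t$ outside $C$ is needed here, because we are working only on the small neighborhood $\widetilde C$.
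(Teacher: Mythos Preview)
Your proposal is correct and follows essentially the same route as the paper: localize via the $k$-slope hypothesis, refine the cover with Milnor's lemma, glue the local deformations $\h_u$ iteratively with a partition of unity, and use compactness (hence finiteness of the relevant $k_{u_i}$'s) to choose $\tau$ so small that each intermediate iterate stays in the domain of the next $\h$. The only cosmetic differences are the order in which you invoke compactness versus Milnor's lemma and your choice of $\widetilde C$; the paper makes the drift estimate explicit as $d(\eta_h(v,t),v)\le\sum_{j,\lambda}k(\vartheta_{j,\lambda}(v)t)$ and picks $\tau$ so that $\max_{[0,\tau]}k\le\tfrac12\min\delta_{j,\lambda}/(h_0\sum_j\#\Lambda_j)$, which is exactly the quantitative version of what you describe.
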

\begin{proof}
We know by hypothesis that $|d_kf|(u)\geq \sigma$, so for every
$u\in C$ there exist a $\delta_u>0$, a continuous map
$k_u:[0,\delta_u]\rightarrow\R^+$, $k_u(0)=0$, and a continuous function
\begin{displaymath}
\h_u:B(u,\delta_u)\times[0,\delta_u]\rightarrow X
\end{displaymath}
satisfying (\ref{cap2ksdef1}) and (\ref{cap2ksdef2}). By Milnor's Lemma
we know that the open cover
$\{B(u,\frac{\delta_u}{2}),\,u\in C\}$ admits a locally finite
refinement $\{V_{j,\lambda},\,\,j\in\N,\,\lambda\in\Lambda_j\}$
such that
\begin{displaymath}
\lambda\neq\mu\Rightarrow V_{j,\lambda}\cap V_{j,\mu}=\emptyset.
\end{displaymath}
By compactness of $C$ we can suppose that $\{V_{j,\lambda}\}$ be a
finite family. In particular there will be an $h_0$ and a finite
number of elements in $\Lambda_j$ s.t.
the family $\{V_{j,\lambda},\,\,j=1,\cdots,h_0,\,\lambda\in\Lambda_j\}$
covers the whole $C$.

Let $\vartheta_{j,\lambda}:X\rightarrow[0,1]$ be a family of
continuous functionals with
\begin{displaymath}
\text{spt }\vartheta_{j,\lambda}\subset V_{j,\lambda},
\end{displaymath}
\begin{displaymath}
\sum_{j=1}^{h_0}\sum_{\lambda\in\Lambda_j}\vartheta_{j,\lambda}(u)=1.
\end{displaymath}

For every $(j,\lambda)$ let $V_{j,\lambda}\subset
B(u_{j,\lambda},\delta_{u_{j,\lambda}})$. To simplify the
notations set $\delta_{j,\lambda}=\delta_{u_{j,\lambda}}$,
$k_{j,\lambda}=k _{u_{j,\lambda}}$ and
$\h_{j,\lambda}=\h_{u_{j,\lambda}}$. Let $\tau_0$ be a positive real
number such that $0<\tau_0<\min\delta_{j,\lambda}$, so every
$k_{j,\lambda}$ is well defined on $[0,\tau_0]$. Let
$$
k(t)=\bigvee_{j,\lambda}k_{j,\lambda}(t);
$$
let $\tau_1$ be a positive real number such that
\begin{equation}
\max_{t\in[0,\tau_1]}k(t)\leq\frac{1}{2}\,
\frac{\min\delta_{j,\lambda}}{h_0\sum_j\#\Lambda_j}.
\end{equation}
Set $\tau=\min\{\tau_0,\tau_1\}.$

Now, called
$$
\widetilde{C}=\bigcup_{j,\lambda}\,\overline{V}_{j,\lambda},
$$
we want to define a sequence of continuous map
\begin{displaymath}
\eta_h:\widetilde{C}\times[0,\tau]\rightarrow X
\end{displaymath}
such that
\begin{eqnarray}
d(\eta_h(v,t),v)&\leq& \sum_{j=1}^h\sum_{\lambda\in\Lambda_j}
k(\vartheta_{j,\lambda}(v)t),\label{cap2etah1}\\
f(\eta_h(v,t))-f(v)&\leq&
-\sigma\left(\sum_{j=1}^h\sum_{\lambda\in\Lambda_j}
\vartheta_{j,\lambda}(v)\right)t.\label{cap2etah2}
\end{eqnarray}

First of all we set
\begin{displaymath}
\eta_1(v,t)=\left\{
\begin{array}{ll}
\h_{1,\lambda}(v,\vartheta_{1,\lambda}(v)t),
&\text{if }v\in\overline{V}_{1,\lambda};\\
v, &\text{if }v\notin \bigcup_{\lambda\in\Lambda_1}V_{1,\lambda}.
\end{array}
\right.
\end{displaymath}
Obviously $\eta_1$ satisfies (\ref{cap2etah1}) and (\ref{cap2etah2}); now
we proceed by induction: assume that we have defined  $\eta_{h-1}$
satisfying (\ref{cap2etah1}) and (\ref{cap2etah2}). For every
$v\in\overline{V}_{h,\lambda}$ we have
\begin{displaymath}
d(\eta_{h-1}(v,t),v)\leq\sum_{j=1}^{h-1}\sum_{\lambda\in\Lambda_j}
k(\vartheta_{j,\lambda}t)\leq(h-1)\sum_j\#\Lambda_j
\max_{t\in[0,\tau]}k(t)\leq\frac{1}{2}\,\,\delta_{h,\lambda},
\end{displaymath}
hence $\eta_{h-1}(v,t)\in B(u_{h,\lambda},\delta_{h,\lambda})$, so
the map
\begin{displaymath}
\eta_h(v,t)=\left\{
\begin{array}{ll}
\h_{h,\lambda}(\eta_{h-1}(v,t),\vartheta_{h,\lambda}(v)t),
&\text{if }v\in\overline{V}_{h,\lambda};\\
\eta_{h-1}(v,t), &\text{if }v\notin
\bigcup_{\lambda\in\Lambda_h}V_{h,\lambda}.
\end{array}
\right.
\end{displaymath}
is well defined and satisfies (\ref{cap2etah1}) and (\ref{cap2etah2}).

Now we set
\begin{equation}
\eta(u,t)=\eta_{h_0}(u,t),
\end{equation}
so we have that $\eta:\widetilde{C}\times[0,\tau]\rightarrow X$ is
continuous. Furthermore
\begin{equation}
d(\eta(v,t),v)\leq \sum_{j=1}^{h}
\sum_{\lambda\in\Lambda_j}k(\vartheta_{j,\lambda}(v)t) \Rightarrow
\, \eta(0,v)=v,
\end{equation}
\begin{equation}
f(\eta(v,t))-f(v)\leq -\sigma \left(\sum_{j=1}^{h-1}
\sum_{\lambda\in\Lambda_j}\vartheta_{j,\lambda}(v)\right)t=-\sigma
t,
\end{equation}
that concludes the proof
\end{proof}

In this lemma we have used the compactness of $C$ to compound the local
retractions without using the property (\ref{cap2ws1}) of the weak slope.
To find a retraction on $X\smallsetminus C$ we must suppose that
$k_u(t)\leq t$ and proceed as in Degiovanni, Marzocchi and Corvellec work
\cite{CDM93}.

\begin{lemma}[deformation lemma for $X\smallsetminus C$]\label{cap2defX-C}
Let $(X,d)$ be a me\-tric space; let $\sigma\in\R^+$ and
let $f:X\rightarrow \R$ be a conti\-nuous function s.t.
\begin{equation}
|d_kf|(u)>\sigma \,\,\,\forall u\in X.
\end{equation}
Suppose also that there
exists a compact set $C\subset X$ such that
\begin{equation}\label{cap2kdiseg}
k_u(t)\leq t\hbox{ }\hbox{ }\forall u\in X\smallsetminus C,
\end{equation}
where $k_u$ is defined as in (\ref{cap2ksdef1}).

Then exist $\tau\in\R^+$ and $\eta:X\smallsetminus
C\times[0,\tau]\rightarrow X$ a continuous functional such that
\begin{itemize}
\item $\eta(u,0)=u$ for all $u\in X\smallsetminus C$; \item
{$f(\eta(u,t))-f(u)\leq -\sigma t$ for all $u\in X\smallsetminus
C$, $t\in[0,\tau]$}.
\end{itemize}
\end{lemma}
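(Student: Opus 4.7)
The plan is to mirror the argument of Lemma \ref{cap2defC}, with the hypothesis $k_u(t)\leq t$ on $X\smallsetminus C$ taking over the role that compactness of $C$ played there; indeed, on $X\smallsetminus C$ the $k$-slope local retractions satisfy the classical weak slope inequality, so the statement is essentially an instance of \cite[Theorem 2.8]{CDM93}.

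First, for each $u\in X\smallsetminus C$, combining $|d_kf|(u)>\sigma$ with $k_u(t)\leq t$ produces $\delta_u>0$ and a continuous $\h_u:B(u,\delta_u)\times[0,\delta_u]\to X$ with $d(\h_u(v,t),v)\leq t$ and $f(\h_u(v,t))-f(v)\leq -\sigma t$. Apply Milnor's lemma to the cover $\{B(u,\delta_u/2)\}_{u\in X\smallsetminus C}$ to obtain a locally finite open refinement $\{V_{j,\lambda}\}$ with $V_{j,\lambda}\cap V_{j,\mu}=\emptyset$ when $\lambda\neq\mu$; for each $(j,\lambda)$ fix $u_{j,\lambda}$ with $V_{j,\lambda}\subset B(u_{j,\lambda},\delta_{u_{j,\lambda}}/2)$, set $\delta_{j,\lambda}:=\delta_{u_{j,\lambda}}$, $\h_{j,\lambda}:=\h_{u_{j,\lambda}}$, and pick $\{\vartheta_{j,\lambda}\}$ a subordinate partition of unity.

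Define $\eta_h$ inductively as in Lemma \ref{cap2defC}, by $\eta_h(v,t)=\h_{h,\lambda}(\eta_{h-1}(v,t),\vartheta_{h,\lambda}(v)t)$ on $\overline{V}_{h,\lambda}$ and $\eta_h=\eta_{h-1}$ outside $\bigcup_\lambda V_{h,\lambda}$. The decisive ingredient is the uniform-in-$h$ estimate
\begin{displaymath}
d(\eta_h(v,t),v)\leq \sum_{j\leq h,\,\lambda}\vartheta_{j,\lambda}(v)\,t\leq t,
\end{displaymath}
obtained by telescoping from $d(\h_{j,\lambda}(w,s),w)\leq s$ and $\sum_{j,\lambda}\vartheta_{j,\lambda}\equiv 1$. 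Choosing $\tau$ sufficiently small then ensures $\eta_{h-1}(v,t)\in B(u_{h,\lambda},\delta_{h,\lambda})$ whenever $v\in\overline{V}_{h,\lambda}$, so each inductive step is well-defined and continuous. Since the cover is locally finite, only finitely many $\vartheta_{j,\lambda}(v)$ are nonzero at any $v$, so the sequence $\eta_h$ stabilises on a neighborhood of $v$; set $\eta:=\lim_h\eta_h$. The energy bound $f(\eta(v,t))-f(v)\leq -\sigma t\sum_{j,\lambda}\vartheta_{j,\lambda}(v)=-\sigma t$ then follows from the inductive estimate.

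The main obstacle is the absence of a uniform lower bound on the $\delta_{j,\lambda}$ when the Milnor cover is countable rather than finite: without $k_u(t)\leq t$, the telescoping distance estimate would either grow with $h$ or require the compactness reduction of Lemma \ref{cap2defC}, and the compositions of local retractions could escape their domains. The weak-slope-type bound $k_u(t)\leq t$ on $X\smallsetminus C$ is precisely what keeps the per-step motion bounded independently of $h$ and $\lambda$, allowing Milnor's lemma to replace compactness and delivering a uniform $\tau>0$.
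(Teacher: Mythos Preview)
Your proposal is correct and takes essentially the same approach as the paper: the paper's proof consists solely of a reference to \cite[Theorem 2.8]{CDM93} together with the remark that the construction parallels Lemma~\ref{cap2defC}, with the inequality $k_u(t)\leq t$ supplying the estimate on $d(\eta_{h-1}(v,t),v)$ needed to make each inductive step well-defined. You have reproduced exactly this outline, correctly isolating the telescoping bound $d(\eta_h(v,t),v)\leq\sum_{j,\lambda}\vartheta_{j,\lambda}(v)\,t\leq t$ as the replacement for compactness.
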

\begin{proof}
For all details see \cite[theorem 2.8]{CDM93}. We note only that the
proof is quite similar to lemma \ref{cap2defC}, but for proving that
$\eta_h$ is well defined we must use the inequality (\ref{cap2kdiseg})
to obtain a good estimate of $d(\eta_{h-1}(v,t),v)$.
\end{proof}

By lemma \ref{cap2defC} and lemma \ref{cap2defX-C} the proof of main theorem
follows as usual.

\begin{proof}[Proof of theorem \ref{cap2ksmain}]
Let $V\subset X$ be s.t. $C\subset V\subset \widetilde{C}$; we can
also choose $V$ such that $B(V,\rho)\subset\widetilde{C} $ for
some $\rho>0$. Set $\eta_C$ and $\eta_{X\smallsetminus C}$ the
retraction found respectively in lemma \ref{cap2defC} and
\ref{cap2defX-C}. For the sake of simplicity we suppose that they are
defined for all $t\in[0,1]$. Let $\theta:X\rightarrow[0,1]$ be a
continuous map s.t.
\begin{eqnarray}
\theta_1|_C&\equiv&0;\\
\theta_1|_{X\smallsetminus V}&\equiv&1.
\end{eqnarray}
and let $\theta_2=1-\theta_1$. Then we define a continuous map
\begin{displaymath}
\mu_1:X\times[0,1]:\rightarrow X,
\end{displaymath}
\begin{displaymath}
\mu_1(u,t)= \left\{
\begin{array}{ll}
\eta_{X\smallsetminus C}(u,\theta_1(u)t)&u\in X\smallsetminus C,\\
u&\text{otherwise};
\end{array}
\right.
\end{displaymath}
we know that
\begin{equation}
f(\mu_1(u,t))-f(u)\leq -\sigma t\theta_1(u),
\end{equation}
and that
\begin{equation}
d(\mu_1(u,t),u)\leq t\theta_1(u).
\end{equation}
Now let
\begin{displaymath}
\mu_2= \left\{
\begin{array}{ll}
\eta_{C}(\mu_1(u,t),\theta_2(u)t)& u\in V, \\
\mu_1(u,t)& \text{otherwise};
\end{array}
\right.
\end{displaymath}
we found that
\begin{displaymath}
d(\mu_1(u,t),u)\leq\theta_1(u)t\leq\rho\,\,\,\;\text{ if }t\leq\rho,
\end{displaymath}
so $\mu_2$ is well defined on $X\times[0,\rho]$.

Obviously we have that
\begin{equation}
\mu_2(u,0)=0\,\,\,\forall u\in X;
\end{equation}
furthermore, if $u \in X\smallsetminus V$ we have that
\begin{equation}
f(\mu_2(u,t))-f(u)=f(\mu_1(u,t))-f(u)\leq-\sigma t,
\end{equation}
and that, if $u\in V$, then
\begin{equation}
\begin{array}c
f(\mu_2(u,t))-f(u)=f(\eta_C(\mu_1(u,t),\theta_2(u)t))-f(u)=\\
=f(\eta_C(\mu_1(u,t),\theta_2(u)t))-f(\mu_1(u,t))+f(\mu_1(u,t))-f(u)\leq\\
-\sigma\theta_2(u)t-\sigma\theta_1(u)t=-\sigma t.
\end{array}
\end{equation}
So, we set $\tau=\rho$ and $\mu=\mu_2$ and we conclude the proof.
\end{proof}

We provide a final remark:
we observe that if there exist a compact set $C\subset X$,
then we are allowed to weaken the standard definition of weak slope.
to compound the local retraction explicitly.

In $X\smallsetminus C$ we must recover
condition (\ref{cap2ws1}) adding the hypothesis (\ref{cap2kustima}) of
theorem \ref{cap2ksmain}: in a non compact set this estimate
makes possible a continuous composition of local retractions.

\nocite{MS83}

\providecommand{\bysame}{\leavevmode\hbox to3em{\hrulefill}\thinspace}

\end{document}